\newtheorem{defi}{Definition}[section]
\newtheorem{prop}[defi]{Proposition}
\newtheorem{lma}[defi]{Lemma}
\newtheorem{rem}[defi]{Remark}
\newtheorem{thm}[defi]{Theorem}
\newtheorem{eg}[defi]{Example}
\DeclareMathOperator{\dist}{dist}
\DeclareMathOperator{\divergence}{div}
\DeclareMathOperator{\supp}{supp}
\definecolor{jan}{rgb}{0.7,0.1,0.1}
\def\N{{\mathbb N}} 
\def\Z{{\mathbb Z}} 
\def\R{{\mathbb R}} 
\def\C{{\mathcal C}}
\def\T{{\mathbb T}} 
\def\P{{\mathcal P}} 
\def\D{{\mathcal D}} 
\def\Hm{{\mathcal H}}
\def\Lm{{\mathcal L}}
\def\cT{{\mathcal T}}
\def\M {{\mathcal M}}
\def\CM {{\mathcal{CM}}}
\def\cA{{\mathcal A}}
\newcommand{\1}{\mathds{1}}
\newcommand{\eps}{\varepsilon}
\newcommand{\weakstar}{\stackrel{\ast}{\rightharpoonup}}
\newcommand{\loc}{\mathrm{loc}}
\title{Limits of density-constrained optimal transport}
\author{Peter Gladbach and Eva Kopfer}
\date{} 
\begin{document}

\maketitle

\begin{abstract}
 We consider the problem of dynamic optimal transport with a density constraint. We derive variational limits in terms of $\Gamma$-convergence for two singular phenomena. First, for densities constrained near a hyperplane we recover the optimal flow through an infinitesimal permeable membrane. Second, for rapidly oscillating periodic constraints we obtain the optimal flow through a homogenized porous medium.
\end{abstract}

\section{Introduction}
Over the last few years, optimal transport has become a vibrant research area with many different applications. In particular, density-constrained flow problems have garnered significant interest starting with the seminal work of Ford and Fulkerson \cite{FordFulkerson}.

In recent years, the theory of constraints has been adapted to optimal transport, first as a static version in \cite{korman2015} and then as dynamic constraints in \cite{buttazzo2009} and \cite{cardaliaguet2016}.

The model we use is based on the dynamic formulation of the Kantorovich distance due to Benamou and Brenier \cite{benamou2000},
\begin{equation}\label{eq: benamou}
W_2^2(\rho_0,\rho_1) = \inf\left\{\int_0^1 \int_{\R^d} \left|\frac{dV_t}{d\rho_t}\right|^2\,d\rho_t\,dt\,:\,\partial_t \rho_t + \divergence V_t = 0\right\}.
\end{equation}

Here the infimum is taken over all curves of probability measures $(\rho_t)_{t\in[0,1]} \subset \P(\R^d)$ with fixed endpoints.

In this paper, we constrain the densities of all intermediate measures $\rho_t$ by some measurable maximal density $h:\Omega \to [0,\infty]$. In this article $\Omega$ is a manifold with boundary, typically $\R^d$  or the torus $\T^d \coloneqq \R^d/\Z^d$.

More precisely we consider first the space $\M_+(\Omega)$ of finite nonnegative Radon measures on $\Omega$ equipped with the weak-$\ast$ (also called narrow) topology in duality with $\C_b(\Omega)$.

By extension the space of weakly-$\ast$ continuous curves of finite nonnegative Radon measures is
\begin{equation}
\CM_+(\Omega) := \left\{(\rho_t)_{t\in[0,1]}\subset \M_+(\Omega)\,:\,  t\mapsto \rho_t\text{ is weakly-$\ast$ continuous with constant mass} \right\}.
\end{equation}

We study in this article the constrained transport functional $E_h:\CM_+(\Omega) \to [0,\infty]$,
\begin{equation}\label{eq: problem}
\begin{aligned}
E_h((\rho_t)_{t\in[0,1]})\coloneqq
\begin{cases}
 \inf\left\{\int_0^1\int_{\Omega}\left|\tfrac{dV_t}{d\rho_t}\right|^2\, d\rho_t\, dt: \partial_t\rho_t+\divergence V_t=0
\text{ in } \D'((0,1) \times \Omega)  \right\},\\
\text{ if }\rho_t(A) \leq \int_A h(x)\,dx\text{ for all }t\in[0,1], A\subseteq \Omega\text{ open.}\\
\infty\text{, otherwise.}
\end{cases} 
\end{aligned}
\end{equation}
Note that the constraint is closed under weak-$\ast$ convergence by the Portmanteau theorem.

For $h=\infty$ we recover the classical Benamou-Brenier formula. If $h\in L^1_{\loc}(\Omega)$ then every admissible $\rho$ is absolutely continuous with density $\frac{d\rho}{dx}\leq h(x)$ almost everywhere. The case $h=\mathds{1}_U$ for some nonconvex $U\subset \Omega$, e.g. an hourglass (see Figure \ref{fig:hourglass}), models optimal transport of an incompressible but sprayable fluid. This specific problem was treated in \cite{liu2016euler}, \cite{liu2016least}. If $U$ is convex, $W_2$-geodesics between two measures $\rho_0,\rho_1\leq \mathds{1}_U$ satisfy the density constraints. If $U$ is not convex, optimal curves under the constraint are not $W_2$-geodesics and interact with the constraint.

\begin{figure}[h]
\begin{center}
\includegraphics{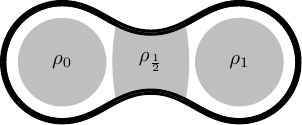}
\end{center}
\caption{Incompressible transport of mass through an hourglass. Here and in all figures, black denotes the mass exclusion region $\{h=0\}$, while white denotes the incompressible region $\{h=1\}$.}\label{fig:hourglass}
\end{figure}

We find the variational limits of two singular phenomena.

\subsection{Thin permeable membranes}

The first is the derivation of an infinitesimal membrane from the constraint
\begin{align}
h^\eps(x) \coloneqq
\begin{cases}
\alpha \eps,&\text{ if }x_d \in (0,\eps)\\
\infty,&\text{ otherwise.}
\end{cases}
\end{align}
for some $\alpha \in ( 0, \infty)$.

Then, as $\eps \to 0$, we derive an effective variational model in the sense of $\Gamma$-convergence as introduced by De Giorgi. We refer to \cite{braides} and \cite{dalmaso} for comprehensive overviews of the theory.

The limit functional acts on curves of nonnegative Radon measures on the topological disjoint union $\R^d_- \sqcup \R^d_+$ of the closed half-spaces
\[
\R^d_- := \R^{d-1} \times (-\infty,0]\text{ and }\R^d_+ := \R^{d-1}\times [0,\infty).
\]
and is given by $E_0:\CM_+(\R^d_- \sqcup \R^d_+) \to [0,\infty]$,
\begin{equation}\label{eq: membrane}
E_0((\rho_t^-,\rho_t^+)_{t\in [0,1]}) \coloneqq \inf\left\{ \int_0^1 \left(\int_{\R^d_-}\left|\frac{d V_t^-}{d\rho_t^-}\right|^2\,d\rho_t^- + \int_{\R^d_+}\left|\frac{d V_t^+}{d\rho_t^+}\right|^2\,d\rho_t^+ +\frac{1}{\alpha} \int_{\R^{d-1}}f_t^2(\tilde x)\, d\Hm^{d-1}(\tilde x) \right) dt\right\},
\end{equation}
where we use the identification $\M_+(\R^d_- \sqcup \R^d_+) = \M_+(\R^d_-) \times \M_+( \R^d_+)$. The infimum is taken over all pairs of distributional solutions to the continuity equations
\begin{align}\label{eq: conteqflux}
\partial_t\rho_t^\pm+\divergence V_t^\pm  \pm f_t\Hm^{d-1}|_{\partial \R^d_\pm} = 0\text{ in }\D'((0,1)\times \R^d_\pm),
\end{align}
meaning that in both half-spaces, for all $\phi^\pm\in\C^\infty_c((0,1)\times \R^d_\pm)$ the following equation holds
	\begin{align*}
	&\int_0^1\left(\langle \rho_t^\pm, \partial_t\phi^\pm_t \rangle + \langle V_t^\pm, \nabla \phi_t^\pm \rangle  \mp \langle f_t, [\phi^\pm_t] 
	\rangle\, \right)dt \\
	=&\int_0^1\left(\int_{\R^d_\pm} \partial_t\phi^\pm_t \, d\rho_t^\pm + \int_{\R^d_\pm} \nabla\phi^\pm_t \cdot\, dV^\pm_t \mp \int_{\partial \R^d_\pm}\phi^\pm_t f_t \, d\Hm^{d-1}\right) dt = 0.
	\end{align*}
Here $V^\pm_t\in \M(\R_\pm^d;\R^d)$ is a vector-valued finite Radon measure which is absolutely continuous with respect to $\rho^\pm_t$ and $f_t\in L^2(\R^{d-1}) = L^2(\partial \R^d_\pm)$ is the flux through the membrane, with positive sign denoting flux from the lower into the upper half-space.

\begin{thm}\label{thm:membrane}
Let $\eps>0$.
Then, as $\eps\to 0$, the energies $E_{h^\eps}\colon \CM_+(\R^d)\to[0,\infty]$ $\Gamma$-converge to the limit functional $E_0 \colon \CM_+(\R^d_- \sqcup \R^d_+)\to[0,\infty]$ in the sense that 
\begin{enumerate}
\item[ \emph{(lower bound)}] if $\rho_t^\eps |_{\R^{d}_-} \weakstar \rho_t^-$ in $\M_+(\R^{d}_-)$ for every $t\in [0,1]$, and $\rho_t^\eps|_{(\R^d_+)^\circ} \weakstar \rho_t^+$ in $\M_+(\R^{d}_+)$ for every $t\in [0,1]$ then 
\begin{align}
\liminf_{\eps\to0} E_{h^\eps}((\rho_t^\eps)_{t\in[0,1]})\geq E_0((\rho_t^-,\rho_t^+)_{t\in[0,1]})
\end{align}
\item[\emph{(upper bound)}] for all curves $(\rho_t^-,\rho_t^+)_{t\in[0,1]}$ with $E_0((\rho_t^-,\rho_t^+)_{t\in[0,1]})<\infty$ there exists a sequence $(\rho_t^\eps)_{t\in[0,1]}$ in $\CM_+(\R^d)$ with $\rho_t^\eps |_{\R^{d}_-} \weakstar \rho_t^-$
in $\M_+(\R^{d}_-)$ for every $t\in [0,1]$, and $\rho_t^\eps|_{(\R^d_+)^\circ} \weakstar \rho_t^+$ in $\M_+(\R^{d}_+)$ for every $t\in [0,1]$ and
\begin{align}
\limsup_{\eps\to0}E_{h^\eps}((\rho_t^\eps)_{t\in[0,1]})\leq E_0((\rho_t^-,\rho_t^+)_{t\in[0,1]}).
\end{align}
\end{enumerate}

\end{thm}

We prove this theorem in Section \ref{section: membrane}. Note that the part $\rho_t^\eps|_{(\R^d_+)^\circ}$ includes the mass in the membrane, which is locally bounded by $\alpha \eps^2$.

Since $\Gamma$-convergence implies the convergence of minimizers, the associated minimal energies between two measures $(\rho_0^-,\rho_0^+),(\rho_1^-,\rho_1^+)\in \M_+(\R^d_- \sqcup \R^d_+)$ of equal mass converge as well, as do the minimizing curves themselves.

\subsection{Homogenization of periodic constraints}

The second result concerns the effective limit as $\eps\to 0$ for
\begin{align}
h_\eps(x) \coloneqq h\left(\frac{x}{\eps}\right),
\end{align}
where $h:\R^d \to [0,\infty)$ is $\Z^d$-periodic. This problem is related to the periodic homogenization of elliptic functionals, see \cite{braides1998}. In fact it is a special case of $\cA$-quasiconvex homogenization treated in \cite{braides2000}. In particular it includes perforated domains, where $h(x) = \mathds{1}_U$ for some periodic open set $U\subset \R^d$, modelling the optimal flow of an incompressible fluid through a porous medium (see Figure \ref{fig: porous}), which has received a lot of attention in recent years, see e.g. \cite{otto2001}, \cite{vazquez2007}. 
To the best knowledge of the authors this is a new development in the derivation of porous media equations from inhomogenous materials via optimal transport.

\begin{figure}[h]
\begin{center}
\includegraphics{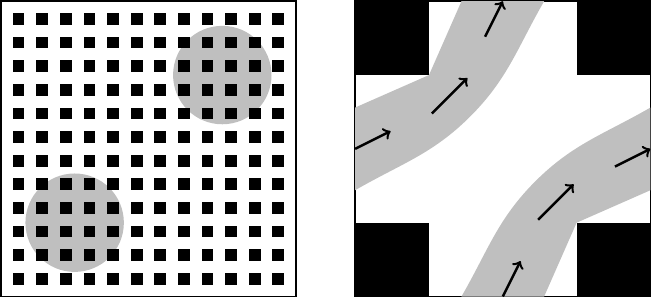}
\end{center}
\caption{Left: Incompressible mass is transported through a region with periodic exclusions. Right: A competitor to the cell problem for $m=\frac14$ and $U=(1,1)$. Note that the exclusion forces a detour increasing the energy.}\label{fig: porous}
\end{figure}

 We will assume throughout the article that $h : \R^d\to [0,\infty)$ satisfies
\begin{enumerate}
	\item [(A1)] $h$ is $\Z^d$-periodic.
	\item [(A2)]$\{h>0\}\subset \R^d$ is open, connected, and Lipschitz bounded.
	\item [(A3)]$h$ is measurable.
	\item [(A4)]$h(x) \in\{0\}\cup [\alpha,\frac1\alpha]$ for some $\alpha\in(0,1]$, for almost every $x\in\R^d$.
\end{enumerate}
We note that $h$ can be interpreted as either a function on the torus $\T^d$ or as a periodic function on $\R^d$.
The connectedness of $\{h>0\}\subset \R^d$ is stronger than connectedness of $\{h>0\} \subset \T^d$.

Under these admissibility assumptions, we show $\Gamma$-convergence of $E_{h_\eps}$ to  the homogenized transport cost $E_{\hom}: \CM_+(\T^d) \to [0,\infty]$,
\begin{align}\label{eq: homogenized}
E_{\hom}((\rho_t)_{t\in[0,1]}) \coloneqq \inf\left\{\int_0^1\int_{\T^d} f_{\hom}(\frac{d\rho_t}{dx},\frac{dV_t}{dx})\, dx\, dt: \partial_t\rho_t+\divergence V_t=0 \text{ in }\mathcal D'((0,1)\times \T^d)\right\},
\end{align}
where only absolutely continuous curves $\rho_t, V_t \ll \Lm^d$ are allowed. Otherwise $E_{\hom}$ is defined to be $\infty$.

The homogenized energy density $f_{\hom}: [0,\infty) \times \R^d \to [0,\infty]$ is given by
\begin{align}\label{eq: f_hom}
f_{\hom}(m,U)\coloneqq\inf\left\{\int_{\T^d}\frac{|W(x)|^2}{\nu(x)}\, dx\right\},
\end{align}
and the infimum is taken among all $\nu\in L^\infty(\T^d)$ such that $0\leq \nu(x) \leq h(x)$ almost everywhere and $\int_{\T^d} \nu(x)\,dx = m$, and all $W\in L^2(\T^d;\R^d)$ such that
$\divergence W=0\text{ in }\mathcal D'(\T^d)$, and $\int_{\T^d}W(x)\,dx = U$.

\begin{thm}\label{thm:homogenizationintro}
Let $h\colon \T^d\to[0,\infty)$ satisfy the assumptions \emph{(A1) - (A4)}.
 Then, as $\eps\to0$, and $\rho_t^\eps\weakstar\rho_t$ in $\M_+(\T^d)$ for every $t\in[0,1]$, $E_{h_\eps} : \CM_+(\T^d) \to [0,\infty]$
$\Gamma$-converges to $E_{\hom}:\CM_+(\T^d) \to [0,\infty]$ in the sense that
\begin{itemize}
\item[\emph{(lower bound)}] if $\rho_t^\eps \weakstar \rho_t$ in $\M_+(\T^d)$ for every $t\in [0,1]$, then 
\begin{align}
\liminf_{\eps\to0} E_{h_\eps}((\rho_t^\eps)_{t\in[0,1]})\geq E_{\hom}((\rho_t)_{t\in[0,1]})
\end{align}
\item[\emph{(upper bound)}] for all curves $(\rho_t)_{t\in[0,1]}$ with $E_{\hom}((\rho_t)_{t\in[0,1]})<\infty$ there exists a sequence $(\rho_t^\eps)_{t\in[0,1]}$ in $\CM_+(\T^d)$ with $\rho_t^\eps \weakstar \rho_t$
in $\M_+(\T^{d})$ for every $t\in [0,1]$, and
\begin{align}
\limsup_{\eps\to 0} E_{h_\eps}((\rho_t^\eps)_{t\in[0,1]})\leq E_{\hom}((\rho_t)_{t\in[0,1]}).
\end{align}
\end{itemize}	
	
\end{thm}

\begin{rem}\label{rem: convexity} In the one-dimensional case $E_{\hom}$ is given by 
 \begin{align}
E_{\hom}((\rho_t)_{t\in[0,1]})=\inf\left\{\int_0^1\int_\R \frac{\left(\frac{dV_t}{dx}\right)^2}{ F(\frac{d\rho_t}{dx})}\, dx\, dt: \partial_t\rho_t+\partial_x V_t=0\right\},
\end{align}
where $F(m) \coloneqq \left(\inf\{\int_0^1\frac1{\nu(x)}\, dx: \nu\leq h,\int_0^1\nu=m\} \right)^{-1}$ is the mobility. Since $f_{\hom}(m,U) = \frac{U^2}{F(m)}$ is convex (see Lemma \ref{lma: prophom}), this means that the mobility $m \mapsto F(m)\leq m$ must be concave, which signifies a congestion effect.
\end{rem}

In Section \ref{sec:compactness} we prove lower-semicontinuity and compactness of the functionals $E_h$, $E_0$ and $E_{\hom}$, which is relevant for later sections. In Section \ref{sec:duality} we find the dual problems of \eqref{eq: problem}, \eqref{eq: membrane} and \eqref{eq: homogenized} and characterize the minimizers by the respective Euler-Lagrange equations. In Section \ref{sec:gradient flows} we state the PDE solved by the steepest descent of the Helmholtz free energy functional
$\rho\mapsto RT\int_{\Omega}\rho(x)\log \rho(x)\, dx+\int_{\Omega}\rho(x)\psi(x)\, dx$
for each cost. Additionally, in Section \ref{sec:stark} we give an example of an optimal curve under a nontrivial density constraint.
Finally, in Section \ref{section: membrane} and Section \ref{section: homogenization} we prove Theorem \ref{thm:membrane} and Theorem \ref{thm:homogenizationintro} respectively.

\section{Compactness}\label{sec:compactness}

\begin{lma}\label{lma: lower semicontinuity}
The functionals $E_h$, $E_0$, and $E_{\hom}$ defined in \eqref{eq: problem}, \eqref{eq: membrane}, and \eqref{eq: homogenized} are lower semicontinuous with respect to pointwise weak-$\ast$ convergence on $\CM_+(\Omega)$, where $\Omega$ is $\R^d$ or $\T^d$, $R^d_-\sqcup\R^d_+$, or $\T^d$ respectively.

Given a fixed finite Radon measure $\rho_0\in \M_+(\Omega)$, all families of curves starting in $\rho_0$ with bounded energies have a subsequence converging pointwise weak-$\ast$ in $\CM_+(\Omega)$.

\end{lma}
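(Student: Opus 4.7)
My plan is to treat all three energies in one unified framework. Each of $E_h$, $E_0$, $E_{\hom}$ has the form
\[
\calF(\rho,V) = \int_0^1 f(\rho_t,V_t)\dd t,
\]
with an additional surface term $\tfrac1\alpha\int_0^1\!\int_{\R^{d-1}} f_t^2\dd\Hm^{d-1}\dd t$ in the case of $E_0$. In every case the integrand is jointly convex, positively $1$-homogeneous, and lower semicontinuous in $(m,v)$: for $E_h$ and $E_0$ it is the classical Benamou--Brenier integrand $f(m,v)=|v|^2/m$ (with the usual conventions at $m=0$), and for $E_{\hom}$ it is $f_{\hom}$, whose convexity and positive $1$-homogeneity follow from the cell-problem definition (as announced in Remark~\ref{rem: convexity}). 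This convex, positively $1$-homogeneous structure is what ultimately drives both statements.

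For the lower semicontinuity I would rely on the Legendre-type dual representation
\[
\int_{\Omega}\Bigl|\frac{\dd V_t}{\dd\rho_t}\Bigr|^2\dd\rho_t = \sup_{\phi\in C_c(\Omega;\R^d)}\int_{\Omega}\bigl(2\phi\cdot\dd V_t - |\phi|^2\dd\rho_t\bigr),
\]
and the analogous Fenchel representation for $f_{\hom}$. Integrating in time and applying Fubini, $\calF$ becomes a supremum of weak-$\ast$ continuous affine functionals on the space-time measures $\dd\rho_t\dd t$ and $\dd V_t\dd t$, hence is jointly weak-$\ast$ lower semicontinuous. Given a sequence $(\rho^n,V^n)$ with $\rho^n_t\weakstar\rho_t$ pointwise in $t$ and $\liminf\calF(\rho^n,V^n)<\infty$, I would first pick near-minimizing vector fields $V^n$; Cauchy--Schwarz yields
\[
|V^n_t|(\Omega)\leq\sqrt{\rho^n_t(\Omega)}\sqrt{F_n(t)},\qquad F_n(t)\coloneqq\int_\Omega\Bigl|\frac{\dd V^n_t}{\dd\rho^n_t}\Bigr|^2\dd\rho^n_t,
\]
so $V^n_t\otimes\dd t$ has uniformly bounded total variation and admits a weak-$\ast$ cluster point $V_t\otimes\dd t$ by Banach--Alaoglu. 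The density constraint $\rho_t\leq h\,\Lm^d$ is weak-$\ast$ closed by the Portmanteau theorem as noted in the introduction, the continuity equation passes to the distributional limit, and in the membrane case the uniform $L^2$ bound on $f^n$ inherited from the energy yields a weak $L^2$ limit $f$. The dual representation then delivers $\calF(\rho,V)\leq\liminf\calF(\rho^n,V^n)$, which after passing to the infimum in $V$ proves lower semicontinuity.

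For the compactness part I would combine the continuity equation with Cauchy--Schwarz to obtain, for every $\phi\in C_c^1(\Omega)$ and every $0\leq s\leq t\leq 1$,
\[
\Bigl|\int\phi\dd\rho^n_t - \int\phi\dd\rho^n_s\Bigr| \leq \|\nabla\phi\|_\infty\int_s^t|V^n_r|(\Omega)\dd r \leq \|\nabla\phi\|_\infty\sqrt{M(t-s)\calF(\rho^n,V^n)},
\]
where $M=\rho_0(\Omega)$ is preserved by the continuity equation. This is a uniform $\tfrac12$-H\"older bound for $t\mapsto\int\phi\dd\rho^n_t$. Spatial tightness is automatic when $\Omega=\T^d$, and for $\Omega=\R^d$ it follows from the standard Benamou--Brenier estimate $W_2^2(\rho^n_0,\rho^n_t)\leq t\,\calF(\rho^n,V^n)$, which controls second moments uniformly in $n$ and $t$. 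An Arzel\`a--Ascoli argument along a countable dense family in $C_c^1(\Omega)$ then extracts a subsequence with $\rho^n_t\weakstar\rho_t$ for every $t\in[0,1]$.

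The main subtlety I anticipate lies in the membrane functional $E_0$: one must pass to the limit simultaneously in the bulk vector measures $V^{n,\pm}$ on $\R^d_\pm$ and in the surface flux $f^n$ on $\{x_d=0\}$, and verify that the coupled equation \eqref{eq: conteqflux} survives in the limit. The uniform $L^2$ control of $f^n$ inherited from the energy is exactly what is needed, as it allows one to identify the weak $L^2$ limit of $f^n$ with the surface flux appearing in the limit continuity equation and thereby to preserve the three-way coupling.
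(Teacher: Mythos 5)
Your overall strategy — convex duality for the lower semicontinuity, plus Hölder-in-time equicontinuity and tightness for compactness — is close in spirit to the paper, which invokes Ambrosio--Fusco--Pallara Theorem~2.34 (lower semicontinuity of convex, $1$-homogeneous integral functionals on measures) where you invoke the Fenchel/Legendre representation. But there are two genuine gaps.

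First, your claim that $f_{\hom}$ is positively $1$-homogeneous is false in general. From the cell formula \eqref{eq: f_hom}, replacing $(\nu,W)$ by $(\lambda\nu,\lambda W)$ only respects the constraint $\nu\leq h$ when $\lambda\leq 1$, and even then one gets an inequality, not equality. A one-dimensional example with $h$ taking two distinct positive values already shows that $F(m)=\inf\{\int 1/\nu:\nu\leq h,\int\nu=m\}$ is not $(-1)$-homogeneous, so $f_{\hom}(m,U)=F(m)|U|^2$ is not $1$-homogeneous. This is not a cosmetic slip: without $1$-homogeneity, the Fenchel-type representation of $\int f_{\hom}\bigl(\frac{d\rho_t}{dx},\frac{dV_t}{dx}\bigr)dx$ is not the supremum of weak-$\ast$ continuous linear functionals when $(\rho_t,V_t)$ may carry singular parts, so lower semicontinuity does not follow directly. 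The paper does not rely on $1$-homogeneity at all; instead, it first shows the singular part of $(\rho_t,V_t)$ vanishes, using $h\in L^1(\T^d)$ to get the uniform density bound $\frac{d\rho_t^n}{dx}\leq\int_{\T^d}h$ and then $V_t\ll\rho_t$ from finite energy, after which AFP~2.34 for a convex lsc integrand applies. You need an analogous check here.

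Second, your tightness argument for compactness treats only $\Omega=\T^d$ (automatic) and $\Omega=\R^d$ (via $W_2^2\leq t\,\mathcal{F}$), but the membrane functional $E_0$ lives on $\Omega=\R^d_-\sqcup\R^d_+$. Because particles may jump across the membrane, the masses of $\rho_t^-$ and $\rho_t^+$ need not be constant in $t$, so neither the standard Wasserstein estimate nor the simple Hölder bound $|\langle\phi,\rho^n_t-\rho^n_s\rangle|\leq\|\nabla\phi\|_\infty\int_s^t|V^n_r|(\Omega)\,dr$ (which omits the flux contribution through the jump term $\langle f_r,[\phi]\rangle$) applies as written. The paper handles this with a separate argument: a Grönwall-type estimate against cutoff functions $\eta_{r,R}$ for tightness, followed by an equicontinuity estimate that includes the $f_t$ term (controlled by the membrane part of the energy), and finally Helly's selection theorem. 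Your remark at the end correctly flags the membrane coupling as delicate, but only at the level of lower semicontinuity; the compactness step needs the same care.
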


\begin{proof}
Compactness in the constrained and homogenized case follows from the fact that in \eqref{eq: problem}, \eqref{eq: homogenized}, the functional is bounded from below by the Wasserstein action \eqref{eq: benamou}. For \eqref{eq: homogenized}, this bound is shown in Lemma \ref{lma: prophom}. The compactness then follows from the tightness of balls in Wasserstein space and the uniform continuity of sequences of curves with finite energy.

We show compactness in the membrane case in two steps. First, for $0<r<R$ define a test function $\eta_{r,R}\in \mathcal C_c^\infty(\R^d_- \sqcup \R^d_+)$ such that $\eta_{r,R}(x) = \eta_{r,R}(|x|)$, $\eta_{r,R} = 0$ outside of $B(0,2R)\setminus B(0,r/2)$, $\eta = 1$ in $B(0,R) \setminus B(0,r)$, and $|\nabla \eta_{r,R}| \leq \frac{C}{r}$. Then
\begin{equation}
\begin{aligned}
\langle \rho_t^n, \eta_{r,R} \rangle = &\langle \rho_0, \eta_{r,R} \rangle + \int_0^t \langle \partial_s \rho_s^n, \eta_{r,R} \rangle \,ds = \langle \rho_0, \eta_{r,R} \rangle + \int_0^t \langle V_s^n, \nabla \eta_{r,R} \rangle \,ds\\
\leq &\langle \rho_0, \eta_{r,R} \rangle + \frac{C}{r} \left(E_0((\rho_s^n)_{s\in [0,1]})\right)^{1/2} \left(\int_0^1\rho_s^n(\R^d_- \sqcup \R^d_+)\,ds\right)^{1/2}.
\end{aligned}
\end{equation}
The last term is uniformly small in $n$ as $r\to \infty$, showing tightness of the $(\rho_t^{-,n},\rho_t^{+,n})_{t\in[0,1],n\in\N} \subset \M_+(\R^d_- \sqcup \R^d_+)$.

Now pick a countable family $(\eta_i)_{i\in\N} \subset \mathcal C_c^\infty(\R^d_- \sqcup \R^d_+)$ that is dense in $\mathcal C_c^0(\R^d_- \sqcup \R^d_+)$. Then whenever $0\leq t_0 \leq t_1 \leq 1$, $n,i\in\N$, we have 
\begin{equation}
\begin{aligned}
\left| \langle \rho_{t_0}^n - \rho_{t_1}^n, \eta_i \rangle \right| = &\left|\int_{t_0}^{t_1} \langle \partial_t \rho_t^n, \eta_i \rangle \,dt\right|\\
 = &\left|\int_{t_0}^{t_1} \langle V_t^n, \eta_i \rangle - \langle f_i, [\eta_i] \rangle\,dt\right|\\
 \leq & C(\eta_i) \left(t_1 - t_0\right)^{1/2} \left(E_0((\rho_t^{-,n},\rho_t^{+,n})_{t\in [0,1]})\right)^{1/2}.
\end{aligned}
\end{equation}
It follows that $\lim_{h \to 0} \sup_{n,t} \left|\langle \rho_{t+h}^n- \rho_t^n, \eta_i \rangle\right| = 0 $ for every $i$. By Helly's Selection Theorem there exists a subsequence $(\rho_t^{n_k})_{t,\in [0,1], k\in \N}$ and a curve $(\rho_t^-,\rho_t^+)_{t\in[0,1]} \subset \M_+(\R^d_- \sqcup \R^d_+)$ such that $\langle \rho_t^{n_k}, \eta_i \rangle \to \langle \rho_t, \eta_i \rangle$ for every $i\in N$ and every $t\in[0,1]$. By tightness, $\rho_t^{n_k} \weakstar \rho_t$ for every $t\in [0,1]$, which proves the compactness.

In cases \eqref{eq: problem} and \eqref{eq: homogenized}, to prove the lower bound, take a sequence of curves $(\rho_t^n,V_t^n)_{t\in[0,1],n\in \N}$ with finite energy. We see by H\"older's inequality that
\begin{equation}
\int_0^1|V_t^n|(B(x,R))\,dt = \int_0^1 \int_{B(x,R)} \left|\frac{dV_t^n}{d\rho_t^n}\right|\,d\rho_t^n\,dt \leq \left(E((\rho^n_t)_{t\in[0,1]})\right)^{1/2}\rho_t^n(B(x,R))^{1/2},
\end{equation}
where $E=E_{h}$ or $E=E_{\hom}$, respectively.
The right hand side is bounded, so that a subsequence $V_t^n$ converges vaguely (not necessarily weak-$\ast$ in the case of \eqref{eq: membrane}) to some $V\in\M([0,1]\times \Omega;\R^d)$. We note that $V_t$ is absolutely continuous with respect to $dt$, so that by the disintegration theorem $V = \int_0^1 V_t\,dt$ for some $V_t\in\M(\Omega;\R^d)$ defined for almost every $t$, and $\partial_t \rho_t + \divergence V_t = 0$ in $\D'((0,1)\times \Omega)$.

The same argument works in case $\eqref{eq: membrane}$, yielding finite measures $(V_t^-,V_t^+)_{t\in[0,1]} \subset \M(\R^d_- \sqcup \R^d_+;\R^d)$. Additionally, $f_t^n \rightharpoonup f_t$ in $L^2([0,1] \times \R^{d-1})$. The limits then solve $\partial_t \rho_t^\pm + \divergence V_t^\pm  \pm f_t\Hm^{d-1}|_{\partial \R^d_\pm}=0$ in $\D'((0,1)\times (\R^d_- \sqcup \R^d_+))$, and by Fubini's theorem and the weak lower semicontinuity of the norm
\begin{equation}
\int_0^1 \int_{\R^{d-1}} \frac{1}{\alpha} f_t^2\,d\Hm^{d-1}\,dt \leq \liminf_{n\to\infty} \int_0^1 \int_{\R^{d-1}} \frac{1}{\alpha} (f_t^n)^2\,d\Hm^{d-1}\,dt.
\end{equation}

To show lower semicontinuity of the remaining term
\begin{equation}
\int_0^1\int_{\Omega} \left|\frac{dV_t}{d\rho_t}\right|^2\,d\rho_t\,dt
\end{equation}
with $\Omega \in \{\R^d, \R^d_- \sqcup \R^d_+, \T^d\}$, we use Theorem 2.34 in \cite{AmFuPa}, which states that for $g:\R^M \to [0,\infty]$ convex, lower semicontinuous, with recession function $g^\infty:\R^M \to [0,\infty]$, the functional defined on $\M(\Omega;\R^M)$
\begin{equation}
G(P) \coloneqq \int_\Omega g\left(\frac{dP}{dx}\right)\,dx + \int_\Omega g^\infty\left(\frac{dP}{d|P|}\right)\,d|P|^s
\end{equation}
is vaguely sequentially lower semicontinuous. We apply this to the sequence $P^n_t \coloneqq (\rho_t^n,V_t^n) \in \M(\Omega;\R^{d+1})$, which in any case converges vaguely to $P_t \coloneqq (\rho_t,V_t)$. The function $g$ is either $g(m,U) = \frac{|U|^2}{m}$ in cases \eqref{eq: problem} and \eqref{eq: membrane} or $g=f_{\hom}$ in case \eqref{eq: homogenized}. We now have to do some extra work depending on the case:

In case \eqref{eq: problem}, we have to show that $\rho_t(A) \leq \int_A h(x)\,dx$ for every open set $A\subset \Omega$. This is the Portmanteau theorem, found in e.g. \cite[Example 1.63]{AmFuPa}.

In case \eqref{eq: homogenized}, we know from Lemma \ref{lma: prophom} that $f_{\hom}$ is convex and lower semicontinuous. We have to make sure that the singular part of $(\rho_t,V_t)$ vanishes. Indeed, this holds if $h \in L^1(\T^d)$, as in that case $\frac{d\rho_t^n}{dx} \leq \int_{\T^d} h(y)\,dy$ for every $n$, and this property is inherited by $\rho_t$. Moreover, $V_t \ll \rho_t$ if the energy is finite.

In case \eqref{eq: membrane}, we have nothing more to show. This completes the proof.
\end{proof}

\section{Duality and minimality}\label{sec:duality}

In this section, we characterize the dual problems to \eqref{eq: problem}, \eqref{eq: membrane}, and \eqref{eq: homogenized} and find the Euler-Lagrange equations. To this end, we fix endpoints $\rho_0,\rho_1\in \M_+(\Omega)$ with finite and equal mass.

\subsection{Constrained optimal transport}

Here we minimize the action functional
\begin{equation}
F((\rho_t, V_t)_{t\in[0,1]}) \coloneqq \int_0^1 \int_\Omega \frac{1}{2}\left|\frac{dV_t}{d\rho_t}\right|^2\,d\rho_t \,dt
\end{equation}
subject to $0\leq \rho_t(A) \leq \int_A h(x)\,dx$ for every open $A\subset \Omega$ and $\partial_t \rho_t + \divergence V_t = 0$ in $\D'((0,1)\times \Omega)$, and $\rho_0,\rho_1$ fixed. We introduce the Lagrange multiplier $\phi_t\in \C^1([0,1]\times \Omega)$ and write by Sion's minimax theorem
\begin{equation}
\begin{aligned}
&\inf_{0\leq \rho_t \leq h\,dx, V_t}\sup_{\phi_t} F((\rho_t, V_t)_{t\in [0,1]}) + \int_0^1 \langle \partial_t \rho_t + \divergence V_t, \phi_t \rangle\,dt\\
= & \sup_{\phi_t} \langle \phi_1, \rho_1 \rangle - \langle \phi_0, \rho_0 \rangle - \inf_{0\leq \rho_t \leq h\,dx} \int_0^1 \langle \partial_t \phi_t+ \frac{1}{2}|\nabla \phi_t|^2, \rho_t \rangle \,dt\\
= &\sup_{\phi_t} \langle \phi_1, \rho_1 \rangle - \langle \phi_0, \rho_0 \rangle - \int_0^1 \langle (\partial_t \phi_t + \frac{1}{2}|\nabla \phi_t|^2)_+, h \rangle \,dt.
\end{aligned}
\end{equation}
The last term is the constrained dual problem. Note that wherever $h=\infty$, we formally recover the Kantorovich dual.

By the complementary slackness theorem, the minimizer $(\rho_t)_{t\in[0,1]}$ and maximizer $(\phi_t)_{t\in[0,1]}$ are characterized by the Euler-Lagrange equations
\begin{equation}
\begin{cases}\label{eq: geodesic flow}
\partial_t \rho_t + \divergence ( \rho_t\nabla \phi_t) = 0\\
\partial_t \phi_t + \frac{1}{2} |\nabla \phi_t|^2 = p_t\\
\rho_t p_t \geq 0\\
\rho_t(h-\rho_t) p_t = 0.
\end{cases}
\end{equation}
Here $p_t:\Omega \to [0,\infty)$ is the Lagrange multiplier to the constraint on $\rho_t$ acting as a pressure on the potential.

\subsection{Optimal membrane transport}

Here we minimize
\begin{equation}
F((\rho_t^\pm, V_t^\pm,f_t)_{t\in[0,1]}) \coloneqq  \int_0^1 \left(\sum_\pm\int_{\R^d_\pm} \frac{1}{2}\left|\frac{dV_t^\pm}{d\rho_t^\pm}\right|^2\,d\rho^\pm_t + \int_{\R^{d-1}} \frac{1}{2\alpha}|f_t|^2 \right) \,dt
\end{equation}
subject to $0\leq \rho_t^\pm$ and $\partial_t \rho_t^\pm + \divergence V_t^\pm  \pm f_t\Hm^{d-1}|_{\partial \R^d_\pm}=0$ in $\D'((0,1)\times (\R^d_- \sqcup \R^d_+))$, and $\rho_0^\pm,\rho_1^\pm$ fixed. We introduce the Lagrange multipliers $\phi_t^\pm \in \C^1([0,1]\times \R^d_\pm)$ and write by Sion's minimax theorem, denoting $[\phi_t](\tilde x) = \phi_t^+(\tilde x) - \phi_t^-(\tilde x):\R^{d-1}\to \R$,
\begin{equation}
\begin{aligned}
&\inf_{0\leq \rho_t^\pm, V_t^\pm,f_t}\sup_{\phi_t^\pm} F((\rho_t^\pm, V_t^\pm,f_t)_{t\in [0,1]}) + \int_0^1 \langle \partial_t \rho_t + \divergence V_t, \phi_t \rangle + \langle [\phi_t],f_t \rangle \,dt\\
= & \sup_{\phi_t^\pm} \sum_\pm \left(\langle \phi_1^\pm, \rho_1^\pm \rangle - \langle \phi_0^\pm, \rho_0^\pm \rangle - \inf_{0\leq \rho_t^\pm} \int_0^1 \langle \partial_t \phi_t^\pm+ \frac{1}{2}|\nabla \phi_t^\pm|^2, \rho_t^\pm \rangle \,dt\right) - \int_0^1\int_{\R^{d-1}}\frac{\alpha}{2} [\phi_t]^2\,d\Hm^{d-1}\,dt\\
= &\sup_{\partial_t\phi_t^\pm + \frac{1}{2}|\nabla \phi_t^\pm|^2 \leq 0} \sum_\pm \left( \langle \phi_1^\pm, \rho_1^\pm \rangle - \langle \phi_0^\pm, \rho_0^\pm \rangle \right) - \int_0^1 \int_{\R^{d-1}} \frac{\alpha}{2} [\phi_t]^2\,d\Hm^{d-1}\, dt.
\end{aligned}
\end{equation}
The last term is the constrained dual problem. Note that as $\alpha \to \infty$, we formally recover the Kantorovich dual problem in $\R^d$, whereas as $\alpha \to 0$, we formally recover two separate Kantorovich dual problems in $\R^d_\pm$.

By the complementary slackness theorem, the minimizers $(\rho_t^\pm)_{t\in[0,1]}$ and maximizers $(\phi_t^\pm)_{t\in[0,1]}$ are characterized by the Euler-Lagrange equations
\begin{equation}
\begin{cases}
\partial_t \rho_t^\pm + \divergence (\rho_t^\pm\nabla \phi_t^\pm ) \mp \alpha[\phi_t]\Hm^{d-1}|_{\partial \R^d_\pm} = 0\\
\partial_t \phi_t^\pm + \frac{1}{2} |\nabla \phi_t|^2 = p_t^\pm\\
\rho_t^\pm p_t^\pm = 0\\
 p_t^\pm \leq 0.
\end{cases}
\end{equation}

\subsection{Homogenized optimal transport}

Here we minimize
\begin{equation}
F((\rho_t, V_t)_{t\in[0,1]}) \coloneqq \int_0^1  \int_{\T^d} f_{\hom}(\rho_t, V_t)\,dx \,dt
\end{equation}
subject to $\partial_t \rho_t + \divergence V_t = 0$ in $\D'((0,1)\times \T^d)$, and $\rho_0,\rho_1$ fixed. We introduce the Lagrange multiplier $\phi_t\in \C^1([0,1]\times \T^d)$ and write by Sion's minimax theorem
\begin{equation}
\begin{aligned}
&\inf_{\rho_t, V_t}\sup_{\phi_t} \int_0^1 \int_{\T^d} f_{\hom}(\rho_t,V_t) +  (\partial_t \rho_t + \divergence V_t) \phi_t \,dx\,dt\\
= & \sup_{\phi_t} \langle \phi_1, \rho_1 \rangle - \langle \phi_0, \rho_0 \rangle - \int_0^1 \int_{\T^d} f_{\hom}^\ast(\partial_t \phi_t, \nabla \phi_t)\,dx \,dt.
\end{aligned}
\end{equation}
The last term is the dual problem. We check that for $f_{\hom}(m,U) = \frac{|U|^2}{2m}$ on $[0,\infty) \times \R^d$, we have
\begin{equation}
f_{\hom}^\ast(\partial_t \phi_t, \nabla \phi_t) =
\begin{cases}
0,&\text{ if }\partial_t \phi_t + \frac{1}{2}|\nabla \phi_t|^2 \leq 0\\
\infty,&\text{ otherwise,}
\end{cases}
\end{equation}
as expected.

By the complementary slackness theorem, the minimizer $(\rho_t)_{t\in[0,1]}$ and maximizer $(\phi_t)_{t\in[0,1]}$ are characterized by the Euler-Lagrange differential inclusions, which are stated in terms of the partial Legendre transform $f_{\hom}^{\ast U}(m,P) \coloneqq \sup_U P\cdot U - f_{\hom}(m,U)$ as
\begin{equation}
\begin{cases}
V_t \in \partial_P^- f_{\hom}^{\ast U}(\rho_t, \nabla \phi_t)\\
\partial_t \rho_t + \divergence V_t =  0\\
\partial_t \phi_t \in \partial_m^- f_{\hom}(\rho_t, V_t).
\end{cases}
\end{equation}
This is a general formulation of congested mean field games. A similar model of congested mean field game is treated in e.g. \cite{benamou2017variational}.

To be more specific, in the idealized case $f_{\hom}(m,U) = \frac{|U|^2}{2m^{1-\beta}}$, $\beta\in [0,1)$, the mean field game equation is given by
\begin{align}
\partial_t \rho_t + \divergence \rho_t^{1-\beta} \nabla \phi_t = 0,\quad
\partial_t \phi_t + \frac{1-\beta}{2}\frac{|\nabla \phi_t|^2}{\rho_t^\beta} = 0.
\end{align} 

\section{Gradient flows}\label{sec:gradient flows}

We now look at the formal constrained gradient flows of the functionals
\begin{equation}
\rho\mapsto RT \int_{\Omega}\rho(x)\log\rho(x)\,dx + \int_\Omega \rho(x) \psi(x)\, dx
\end{equation}
with $\psi\in \mathcal C^1(\Omega)$ the Gibbs free energy, and $R,T>0$ the gas constant and the temperature respectively.

We will write down the PDE corresponding to steepest descent of $F$ with costs given by \eqref{eq: problem}, \eqref{eq: membrane}, and \eqref{eq: homogenized}. Without loss generality we assume $RT=1$.

\subsection{Constrained gradient flow}

Given $\rho\in L^1$, we want to find $V\in L^1_{\loc}(\Omega;\R^d)$ minimizing
\begin{equation}
\begin{aligned}
&\int_\Omega - ((\log\rho(x) + 1) + \psi(x)) \divergence  V(x) + \frac{|V(x)|^2}{2\rho(x)}\,dx \\
= &\int_\Omega ( \nabla \log\rho(x) + \nabla \psi(x)) \cdot V(x) + \frac{|V(x)|^2}{2\rho(x)}\,dx
\end{aligned}
\end{equation}
subject to $\divergence V \geq 0$ on $\{\rho = h\}$. We introduce a Lagrange multiplier $ p \in L^1(\Omega)$, $p \geq 0$, $p(h-\rho) = 0$, and write the above problem as
\begin{equation}
\begin{aligned}
 &\min_V \sup_{p \geq 0, p(h-\rho) =0} \int_\Omega ( \nabla \log\rho(x) + \nabla \psi(x)) \cdot V(x) + \frac{|V(x)|^2}{2\rho(x)} - p(x)\divergence V(x)\,dx\\
 = &  \sup_{p \geq 0, p(h-\rho) =0} \min_V \int_\Omega ( \nabla \log\rho(x) + \nabla \psi(x) + \nabla p(x)) \cdot V(x) + \frac{|V(x)|^2}{2\rho(x)}\,dx\\
 = &  \sup_{p \geq 0, p(h-\rho) =0} \int_\Omega -\frac{\rho(x)}{2}| \nabla \log\rho(x) + \nabla \psi(x) + \nabla p(x)|^2\,dx\\
\end{aligned}
\end{equation}

We see that the minimizer can be written $V(x) = -\rho\nabla \phi(x)$, where $\phi:\Omega \to \R$ solves the elliptic obstacle problem
\begin{equation}
 \begin{cases}
  \phi(x) \geq \log \rho(x) + \psi(x)\\
  \phi(x) =  \log \rho(x) + \psi(x) \text{ in }\{\rho < h\}\\
  \phi\text{ maximizes }\int_\Omega -\frac{\rho(x)}{2}|\nabla \phi(x)|^2\, dx.
 \end{cases}
\end{equation}

Physically, the difference between $\phi(x)$ and the chemical potential $ \log\rho(x) + \psi(x)$ acts as a hydrostatic pressure $p(x)\geq 0$ with $p(x)(h(x)-\rho(x)) = 0$.

Inserting $V$ into the continuity equation yields a constrained version of the Fokker-Planck equation, 
\begin{equation}\label{eq: constrained heat}
\partial_t \rho_t - \divergence (\rho_t \nabla \phi_t) = 0.
\end{equation}
 Note that in \cite{jordan1998}, the authors rigorously derive the unconstrained Fokker-Planck equation as the $W_2$-gradient flow of $F$. We note that this version of the constrained Fokker-Planck equation differs from the Stefan problem treated in e.g. \cite{moerbeke1976}, which is not mass-preserving.

\subsection{Membrane gradient flow}
Here, given $(\rho^-,\rho^+)\in L^1(\R^d_- \sqcup \R^d_+)$, and a Gibbs free energy $(\psi^-,\psi^+)\in \mathcal C^1(\R^d_- \sqcup \R^d_+)$, we find $V^-,V^+,f$ minimizing
\begin{equation}
\sum_\pm \int_{\R^d_\pm}  ( \nabla \log \rho^\pm(x) + \nabla \psi^\pm(x)) \cdot V^\pm(x) + \frac{|V^\pm(x)|^2}{2\rho^\pm(x)}\,dx + \int_{\R^{d-1}} -[\log\rho + \psi](\tilde x) f(\tilde x) + \frac{f(\tilde x)^2}{2\alpha} \,d\tilde x.
\end{equation}
Inserting the minimizers into the continuity equation yields two Fokker-Planck equations coupled through the Teorell equation on the membrane \cite{teorell},
\begin{equation}
\begin{cases}
\partial_t \rho_t^\pm - \Delta \rho_t^\pm - \divergence(\rho_t^\pm \nabla \psi^\pm) = 0, &\text{ in }\R^d_\pm\\
\nabla \rho_t^\pm \cdot e_d = \alpha [\log\rho_t + \psi],&\text{ on }\partial \R^d_\pm.
\end{cases}
\end{equation}

\subsection{Homogenized gradient flow}
Given $\rho\in L^1(\T^d)$, we find $V\in L^1_{\loc}(\T^d;\R^d)$ minimizing
\begin{equation}
\int_{\T^d} \nabla (\log \rho(x) + \psi(x)) \cdot V(x) + f_{\hom}(\rho(x),V(x))\,dx.
\end{equation}
We see that $V(x) \in \partial_P^- f_{\hom}^{\ast U}(\rho, -  \nabla \log \rho(x) - \nabla \psi(x))$. Note that for $f_{\hom}(m,U) = \frac{|U|^2}{2m^{1-\beta}}$, $\beta \in (0,1)$, which is a reasonable choice according to Remark \ref{rem: convexity}, and $\psi = 0$, we recover the porous medium equation
\begin{equation}\label{eq: porous}
0 = \partial_t \rho_t + \divergence (- \rho_t^{1-\beta} \nabla \log \rho_t) = \partial_t \rho_t - \frac{1}{1-\beta} \Delta \rho_t^{1-\beta}.
\end{equation}
We note that Theorem \ref{thm:homogenizationintro} does not imply convergence of gradient flows \eqref{eq: constrained heat} with $h=h_\eps$ to \eqref{eq: porous}.

\section{The stark constraint}\label{sec:stark}
In the following we give a simple one-dimensional example of an optimal curve under a nontrivial density constraint, which bounds the density by $\lambda>0$ on $(0,\infty)$.
\begin{equation}
h(x) = \begin{cases}
       \lambda, &\text{ if }x\in (0,\infty)\\
       \infty, &\text{ otherwise},
       \end{cases}
\end{equation}
where $\lambda,m>0$.  We call this the stark constraint.
We construct an optimal curve $(\rho_t)_{t\in[0,1]}$ starting in $\rho_0 =m \delta_0$ and ending in the uniform density $\rho_1 = \lambda \mathds{1}_{(0,\frac{m}\lambda)}\,dx$.

We choose this example because the solution breaks conservation of momentum, while kinetic energy is conserved. The calculations in this case are straightforward but already quite lengthy. The complexity only increases in higher dimensions and with more variation in $h$.

We choose the following ansatz for the optimal curve:
\begin{align}
\rho_t =\lambda\Lm|_{(0,x_t)} + (m - \lambda x_t)\delta_0.
\end{align}
 We see that any $\rho_t(A) \leq \lambda \Lm(A)$ for any Borel $A\subseteq (0,\infty)$, and the boundary conditions are satisifed if and only if $x_0 = 0$ and $x_1 = \frac m\lambda$. The momentum field $V_t=\lambda \dot x_t\Lm|_{(0,x_t)}$ solves the continuity equation 
\begin{align}
\partial_t\rho_t+\partial_xV_t=0
\end{align}
with action given by
\begin{align}
\int_0^1\int_{\R} \left(\frac{dV_t}{d\rho_t}\right)^2\, d\rho_t\, dt=\int_0^1\lambda\dot x_t^2 x_t\, dt=\int_0^1|\frac{d}{dt}G(x_t)|^2\, dt,
\end{align}
where $G(y)=\frac23 \lambda^{1/2}y^{3/2}$. The minimizer satisfies $\frac{d}{dt}G(x_t)=c$, where $c$ is the unique constant compatible with the boundary conditions $x_0=0$ and $x_1=\frac{m}{\lambda}$. 
We see that 
\begin{align}
x_t=
\frac m\lambda t^{\frac23},
\end{align}
and consequently
\begin{align}
\int_0^1|\frac{d}{dt}G(x_t)|^2\, dt=(G(x_1)-G(x_0))^2=\frac49\frac{m^3}{\lambda^2}.
\end{align}

We claim that $\rho_t$ is optimal among all curves independent of the ansatz. To see this we consider the dual problem. Let 
\begin{align}
\phi_t(x)=\frac23\frac m\lambda  t^{-\frac13}x_+,
\end{align}
where $\phi_0(0)=0$ and $\phi_0(x)=\infty$ for $x>0$.
Formally, we have
\begin{equation}
\begin{aligned}
& \lim_{\eps \to 0}\langle \phi_1,\rho_1\rangle-\langle \phi_\eps,\rho_\eps\rangle-\int_\eps^1\int_{\R} (\partial_t\phi_t+\frac12|\partial_x\phi_t|^2)_+ h\, dx\, dt\\
= &\frac13 \frac{m^3}{\lambda^2} - 0 - \int_0^1 \int_0^{x_t} \frac29 m t^{-2/3}\left( - t^{-2/3} x + \frac m\lambda \right)\,dx\, dt\\
= & \frac 29\frac{m^3}{\lambda^2},
\end{aligned}
\end{equation}
which is half the primal cost $\int_0^1|\frac{d}{dt}G(x_t)|^2\, dt$. By duality $\rho_t$ and $\phi_t$ must be optimal. In fact, they formally solve \eqref{eq: geodesic flow} with pressure
\begin{equation}
p_t(x) = \frac 29 \frac m\lambda t^{-2/3}\left( \frac m \lambda - t^{-2/3} x \right) \mathds{1}_{(0,x_t)}(x).
\end{equation}

 However, at $x=0$, $\phi_t$ is not differentiable and at $t=0$, it is not continuous. 
To make the optimality precise, we approximate $\phi$ with $\mathcal C^1$-functions $\phi_t^\eps(x) = \frac23 \frac{m}{\lambda}t^{-\frac13}\eta^\eps(x)$, with $\eta^\eps\to x_+$ uniformly and $(\eta^\eps)' - \1_{[0,\infty)} \to 0$ in $L^1(\R)$. Then for every $\delta >0$, we have
\begin{align}
\lim_{\eps \to 0} \langle \phi_{1-\delta}^\eps,\rho_{1-\delta}\rangle-\langle \phi_\delta^\eps,\rho_\delta\rangle-\int_\delta^{1-\delta}\int_{\R} (\partial_t\phi_t^\eps+\frac12|\partial_x\phi_t^\eps|^2)_+ h\, dx\, dt=\frac12 \int_\delta^{1-\delta}\left|\frac{d}{dt}G(x_t)\right|^2\,dt.
\end{align}
This shows that $(\rho_t)_{t\in[\delta,1-\delta]}$ is optimal. Letting $\delta \to 0$, optimality of $(\rho_t)_{t\in [0,1]}$ follows.

\section{The membrane limit}\label{section: membrane}
We now prove Theorem \ref{thm:membrane}. Recall that $h^\eps:\R^d\to[0,\infty]$ is given by the stark constraint
\begin{equation}
h^\eps(x) =
\begin{cases}
\alpha \eps, &\text{ if } x_d\in (0,\eps)\\
\infty, &\text{ elsewhere,}
\end{cases}
\end{equation}
with $\alpha\in (0,\infty)$ fixed and $\eps \to 0$. 

Because $\Gamma$-convergence is compatible with partial minimization, the minimum costs for all curves also $\Gamma$-converge. 

\begin{proof}[Proof of the lower bound]
Consider a family of curves of bounded nonnegative measures $(\rho_t^\eps)_{t\in [0,1],\eps>0} \subset \M_+(\R^d)$ with $\rho_t^\eps(dx) \leq h^\eps(x)\, dx$, where $\rho_t^\eps|_{\R^{d-1}\times (-\infty,0]}\weakstar \rho_t^-$ and $\rho_t^\eps|_{\R^{d-1}\times (0,\infty)}\weakstar \rho_t^+$. Also find the respective minimizing momentum fields $(V_t^\eps)_{t\in [0,1],\eps>0} \subset \M(\R^d, \R^d)$ such that $\partial_t \rho_t^\eps + \divergence V_t^\eps = 0$ in $\D'((0,1)\times \R^d)$ and
\begin{equation}\label{eq: lower bound energy}
E_{h^\eps}((\rho_t^\eps)_{t\in [0,1]}) = \int_0^1 \int_{\R^d} \left|\frac{dV_t^\eps}{d\rho_t^\eps}\right|^2\,d\rho_t^\eps\,dt.
\end{equation}

We shall assume throughout the proof that \eqref{eq: lower bound energy} is bounded by some constant independent of $\eps$ by extracting a subsequence, as without the existence of a bounded energy subsequence there is nothing to prove.

We now employ the standard dimension reduction technique of blowing up the thin constrained region, as was done in e.g. \cite{FJM}. We introduce the notation $x = (\tilde x, x_d) \in \R^d$. To that end, let $T_\eps: \R^d \to \R^d$ be defined by
\begin{equation}
T_\eps(x) = \begin{cases}
            x - (1-\eps)e_d, &\text{ if }x_d\geq 1\\
            (\tilde x, \eps x_d), &\text{ if }x_d\in (0,1)\\
            x, &\text{ if }x_d \leq 0,
            \end{cases}
\end{equation} 
so that $T_\eps(\R^{d-1}\times (0,1)) = \R^{d-1}\times (0,\eps)$.

We define $\pi_t^\eps = (T_\eps)_\# \rho_t^\eps$ and $W_t^\eps(x) = DT_\eps(T_\eps^-1(x)) V_t^\eps(T_\eps^{-1}(x))$, i.e.
\begin{equation}
W_t^\eps(x) = \begin{cases}
              V_t^\eps(x - (1-\eps)e_d),&\text{ if }x_d \geq 1\\
              (\eps\tilde V_t^\eps(\tilde x, \eps x_d), (V_t^\eps)_d(\tilde x, \eps x_d), &\text{ if }x_d\in (0,1)\\
              V_t^\eps(x), &\text{ if }x_d \leq 0. 
              \end{cases}
\end{equation}
By this choice, $\partial_t \pi_t^\eps + \divergence W_t^\eps = 0$, and
\begin{equation}
\int_0^1 \int_{\R^{d-1} \times (0,\eps)} \frac{|V_t^\eps|^2}{\rho_t^\eps} \,dx \,dt = \int_0^1 \int_{\R^{d-1} \times (0,1)} \frac{|\tilde W_t^\eps|^2 + \eps^2 (W_t^\eps)_d^2}{\pi_t^\eps}\,dx \,dt.
\end{equation}

Because $\pi_t^\eps \leq \alpha \eps^2$ in $\R^{d-1} \times (0,1)$, it follows that $\tilde W_t^\eps \to 0$ strongly in $L^2([0,1] \times \R^{d-1}\times (0,1))$, and that a subsequence of $(W_t^\eps)_d$ converges weakly in $L^2([0,1] \times \R^{d-1} \times (0,1))$ to some $f_t \in L^2([0,1] \times \R^{d-1} \times (0,1))$. In addition, $\pi_t^\eps \to 0$ in $L^\infty([0,1]\times \R^{d-1} \times (0,1))$. Thus, the continuity equation holds for the limit, i.e. $0 = \partial_t 0 + \divergence(0,f_t) = \partial_d f_t$ in $\D'((0,1)\times \R^{d-1} \times (0,1))$, i.e. $f_t(\tilde x, x_d) = f_t(\tilde x)$. By Mazur's Lemma, it follows that
\begin{equation}
\begin{aligned}
\int_0^1 \int_{\R^{d-1}}  f_t^2(\tilde x) \,d\tilde x\,dt \leq &\liminf_{\eps \to 0} \int_0^1 \int_{\R^{d-1} \times (0,1)} (W_t^\eps)_d^2\,dx\,dt\\
 \leq \alpha  \liminf_{\eps \to 0} \int_0^1 \int_{\R^{d-1} \times (0,1)} \frac{\eps^2(W_t^\eps)_d^2}{\pi_t^\eps}\,dx\,dt \leq &\alpha \liminf_{\eps \to 0} \int_0^1 \int_{\R^{d-1} \times (0,\eps)} \frac{|V_t^\eps|^2}{\rho_t^\eps}\,dx\,dt.
\end{aligned}
\end{equation}
Dividing both sides by $\alpha$ yields the part of the lower bound in the membrane $\R^{d-1} \times (0,\eps)$. For the outer part of the membrane
we find by Jensen's inequality 
\begin{equation}
\int_0^1 |W^\eps_t|(\R^d \setminus(\R^{d-1} \times (0,1))) \,dt \leq \sqrt{\int_0^1 \int_{\R^d\setminus(\R^{d-1} \times (0,1))} \left|\frac{dW_t^\eps}{d\pi_t^\eps}\right|^2 \,d\pi_t^\eps \,dt} \leq C,
\end{equation}
because the energy is finite.
Also $\int_0^1 \|W^\eps_t\|_{L^2(\R^{d-1}\times (0,1))}^2 \,dt \leq C$, from which we infer that a subsequence of $W^\eps_t$ converges vaguely to some Radon measure $W = (W_t)_{t\in [0,1]} \in \M([0,1]\times \R^d;\R^d)$ with $\partial_t \pi_t + \divergence W_t = 0$, where 
\begin{equation}
\pi_t(dx) = \begin{cases}
              \rho^+_t(dx - e_d),&\text{ if }x_d \geq 1\\
              0, &\text{ if }x_d\in (0,1)\\
              \rho_t^-(dx), &\text{ if }x_d \leq 0. 
              \end{cases}
\end{equation}
 By Lemma \ref{lma: lower semicontinuity} ($h=\infty$) we have 
\begin{equation}
\begin{aligned}
&\int_0^1  \int_{\R^d \setminus (\R^{d-1} \times (0,1))} \left|\frac{dW_t}{d\pi_t}\right|^2\,d\pi_t \,dt 
\leq &\liminf_{\eps \to 0} \int_0^1 \int_{\R^d \setminus (\R^{d-1} \times (0,1))} \left|\frac{dV_t^\eps}{d\rho_t^\eps}\right|^2\,d\rho_t^\eps\,dt.
\end{aligned}
\end{equation}
All in all we obtain
\begin{equation}
\begin{aligned}
&\int_0^1 \left( \int_{\R^d \setminus (\R^{d-1} \times (0,1))} \left|\frac{dW_t}{d\pi_t}\right|^2\,d\pi_t + \int_{\R^{d-1}} \frac{|f_t|^2}{\alpha}\,d\tilde x\right) \,dt \\
\leq &\liminf_{\eps \to 0} \int_0^1 \int_{\R^d} \left|\frac{dV_t^\eps}{d\rho_t^\eps}\right|^2\,d\rho_t^\eps\,dt.
\end{aligned}
\end{equation}
We define 
$V_t^-:=W_t|_{\R^{d-1}\times(-\infty,0]}$ and $V_t^+:=W_t(\cdot-e_d)|_{\R^{d-1}\times (0,\infty)}$. Let $\phi\in\C_c^\infty((0,1)\times\R^d_-)$ and let $\Phi\in\C^\infty_c((0,1)\times\R^{d-1}\times (-\infty,1))$ be an extension of $\phi$. Then
\begin{align}
&\int_0^1\int_{\R^d_-}\partial_t\phi_t\, d\rho_t^-\, dt
=\int_0^1\int_{\R^{d-1}\times (-\infty,1)}\partial_t\Phi_t\, d \pi_t\, dt\\
=&\int_0^1\int_{\R^{d-1}\times (-\infty,1)}\nabla\Phi_t\cdot\, d W_t\, dt\\
=&\int_0^1\int_{\R^{d-1}_-}\nabla\phi_t\cdot\, d V_t^-\, dt
+\int_0^1\int_{\R^{d-1}\times(0,1)}\partial_d\Phi_t(\tilde x,x_d)f_t(\tilde x)\, d(\tilde x,x_d)\, dt\\
=&\int_0^1\int_{\R^{d-1}_-}\nabla\phi_t\cdot\, d V_t^-\, dt
-\int_0^1\int_{\partial\R^{d}_-}\phi_t(\tilde x)f_t(\tilde x)\, d\tilde x\, dt,
\end{align}
which shows the continuity equation \eqref{eq: conteqflux} in the lower half-space. The upper half-space works similarly.
\end{proof}

To prove the upper bound, we find it is useful to represent the limit problem in Lagrangian coordinates. For curves in $W_2(\R^d)$ with finite kinetic action, this is done by the well-known superposition principle due to Smirnov \cite{smirnov1994} and applied to optimal transport in e.g. \cite{Ambrosio-Crippa}. Here, the particle trajectories may jump between the half-spaces and are thus not continuous. A natural class of curves are the special curves of bounded variation defined below, see also Figure \ref{fig: jumps}.

\begin{defi}
Given $d\in \N\setminus\{0\}$, we define the class $SBV_2^\div$ of curves in $\R^d_- \sqcup \R^d_+$ containing all $X : [0,1] \to \R^d_- \sqcup \R^d_+$ such that $X$ is absolutely continuous up to a finite jump set $J_X \subset (0,1)$, with velocity $\int_{[0,1] \setminus J_X} |\dot X_t|^2\,dt < \infty$, and mirrored traces at the jumps $X_{t^-} = SX_{t^+}$ for all $t\in J_X$, where $S:\R^d_- \sqcup \R^d_+ \to \R^d_- \sqcup \R^d_+$ is the mirror function mapping $(\tilde x, x_d)\in \R^d_\pm$ to $ (\tilde x, -x_d) \in \R^d_\mp$.

We also define the subclass $SBV_2^0$ as all curves $X\in SBV_2^\div$ with jump traces on the boundary $\partial \R^d_\pm$.

We equip $SBV_2^\div$ with the notion of weak convergence, where $X^k \rightharpoonup X$ if $X^k \to X$ in $L^1([0,1];\R^d_- \sqcup \R^d_+)$, $\dot X^k \rightharpoonup \dot X$ weakly in $L^2([0,1];\R^d)$, and the measures \\
$(\sum_{t\in J_{X^k}} \sigma(t) \delta_t)_{k\in \N} \subset \M([0,1])$ converge weakly-$\ast$ in $\M([0,1])$ to some $\nu$, with $\sum_{t\in J_{X}} \sigma(t) \delta_t = \nu|_{(0,1)}$, where $\sigma\in\{\pm 1\}$ denotes the sign of the $e_d$ component of the jump. (Here we need to exclude jumps converging to $0$ or $1$, as they vanish from the jump set)
\end{defi}

\begin{figure}[h]
\begin{center}
\includegraphics{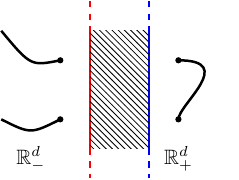}
\end{center}
\caption{A curve in the space $SBV_2^\div$. Note that if $X$ is in $SBV_2^0$ the traces at the jumps must be located on the boundary $\partial \R^d_\pm$.}\label{fig: jumps}
\end{figure}

We now state some elementary properties of $SBV_2^\div$.

\begin{lma}\label{lem: SBV properties}
The notion of weak convergence in $SBV_2^\div$ is metrizable. The underlying metric space is Polish, and $SBV_2^0$ is a weakly closed subset. Given $M>0$, the set
\begin{align}
A_M = \{X\in SBV_2^\div\,:\,|X_0|\leq M, \int_0^1 |\dot X_t|^2\,dt \leq M^2, \#  J_{X^k} \leq M^d\}
\end{align}
is weakly sequentially compact, with $SBV_2^\div = \bigcup_{M\in \N} A_M$.
\end{lma}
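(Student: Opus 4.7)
The plan is to identify each $X \in SBV_2^\div$ with the triple $(X_0, \dot X, \mu_X) \in (\R^d_- \sqcup \R^d_+) \times L^2([0,1];\R^d) \times \M([0,1])$, where $\mu_X = \sum_{t \in J_X} \sigma(t)\delta_t$. Under this identification, weak convergence in $SBV_2^\div$ decomposes into strong convergence of $X^k_0$, weak $L^2$ convergence of $\dot X^k$, and weak-$\ast$ convergence of $\mu^k$ to some $\nu$ with $\nu|_{(0,1)} = \mu_X$. Each factor is Polish on bounded subsets via the standard metrization of the weak topology on the separable reflexive space $L^2$ and of the weak-$\ast$ topology on the separable dual $\M([0,1])$, so a telescoping Fr\'echet combination produces a single metric on $SBV_2^\div$ under which the space is Polish.

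For weak closedness of $SBV_2^0$: if $X^k \in SBV_2^0$ converges weakly to $X$, then $\mu^k \weakstar \nu$ with $\mu_X = \nu|_{(0,1)}$, so for every $t \in J_X$ there exist jump times $t^k_i \in J_{X^k}$ with $t^k_i \to t$ whose signed masses aggregate to $\sigma(t)$. H\"older's inequality applied to the absolutely continuous parts of $X^k$ between $0$ and $t^k_i$ then yields $X^k(t^k_i -) \to X(t-)$, and since each $X^k(t^k_i -)$ lies on the closed set $\partial \R^d_\pm$, so does the limit; the same argument works for the right trace. Hence $X \in SBV_2^0$.

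For sequential compactness of $A_M$, given $(X^k) \subset A_M$, the bounds $|X^k_0| \leq M$, $\|\dot X^k\|_{L^2} \leq M$, and $\#J_{X^k} \leq M^d$ yield by diagonal extraction limits $x_0$, $v$, and $\nu$ in the respective weak senses. After further extraction we may assume $\#J_{X^k} = N$ is constant, the ordered jump times $t^k_1 \leq \cdots \leq t^k_N$ converge to $t_1 \leq \cdots \leq t_N \in [0,1]$, and the signs $\sigma^k_i$ stabilize to $\sigma_i \in \{\pm 1\}$ (finite combinatorics). Group the indices into clusters sharing a common limit value $t$. On short intervals between clustered jumps the H\"older estimate $|X^k(t^k_i +) - X^k(t^k_{i+1} -)| \leq (t^k_{i+1} - t^k_i)^{1/2} \|\dot X^k\|_{L^2}$ forces the displacement to vanish, so after $p$ consecutive collisions at $t$ the iterated mirror relation gives $X^k(t^k_{i+p-1} +) \to S^p X(t-)$. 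Since $X^k$ lies entirely in one half-space between consecutive jumps, the signs within a cluster must alternate, so their signed sum is $0$ or $\pm 1$, matching exactly the atomic mass of $\nu|_{(0,1)}$ at $t$; clusters accumulating at $\{0,1\}$ vanish in $\nu|_{(0,1)}$. The absolutely continuous curve with velocity $v$, initial value $x_0$, and $S$-mirrored jumps at the distinct cluster points carrying a nontrivial signed total then lies in $SBV_2^\div \cap A_M$ and realizes the weak limit. The equality $SBV_2^\div = \bigcup_M A_M$ is immediate since every curve has finite initial value, $L^2$ velocity, and jump count.

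The hardest step is the cluster analysis inside compactness: showing that the mirror-symmetric jump structure $X(t-) = SX(t+)$ survives the collapse of several near-simultaneous jumps. The key ingredients are the involution $S^2 = \mathrm{Id}$ combined with the alternation of $\sigma_i$ within a cluster, forced by the fact that $X^k$ must remain in one half-space between consecutive jumps, so that the net signed mass of each cluster is either $0$ (no jump in the limit) or $\pm 1$ (single jump in the limit), consistently with the atomic structure of $\nu|_{(0,1)}$.
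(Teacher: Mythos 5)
Your plan -- embed $SBV_2^\div$ into a product of metrizable spaces and read off Polishness, closedness, and compactness coordinatewise -- is the same strategy the paper uses, so there is no structural novelty here. What you do add, and do carefully, is the content the paper leaves implicit: the alternating-sign cluster analysis that shows a collapsing family of near-simultaneous jumps produces in the limit either no jump (even parity) or a single mirror jump (odd parity), consistently with the atom of $\nu|_{(0,1)}$ at the cluster point; and the use of the closedness of $\partial \R^d_\pm$ to pass the boundary-trace condition through the limit for $SBV_2^0$. The paper's proof just asserts that compactness ``follows from the above argument'' and never explicitly addresses $SBV_2^0$-closedness, so your elaboration is a genuine improvement in rigor rather than a different route.

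One small point where your proposal deviates from the paper's definition in a way you should be aware of: you replace ``$X^k \to X$ in $L^1([0,1];\R^d_- \sqcup \R^d_+)$'' with ``strong convergence of $X^k_0$.'' These are not interchangeable on all of $SBV_2^\div$: if the jump count of $X^k$ is unbounded, the sign $\mathrm{sign}(X_d^k)$ can oscillate on a set of nonvanishing measure while the signed jump measures cancel weakly-$\ast$, so $X_0^k \to X_0$, $\dot X^k \rightharpoonup \dot X$, and $\mu^k \weakstar \nu$ need not give $L^1$ convergence of the curves. The two formulations do coincide on each $A_M$, where the jump count is bounded, and that is the only place the lemma is actually used, so the gap is harmless in context -- but it means your ``identification'' is an identification of convergence restricted to $A_M$, not a topological isomorphism on all of $SBV_2^\div$. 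Phrasing it that way would also make the Polishness argument cleaner, since $SBV_2^\div = \bigcup_M A_M$ with each $A_M$ compact metrizable is the more natural starting point than a global telescoping metric.

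Two further minor remarks on the cluster step. First, the alternation of the $\sigma_i$ is global (each jump flips the half-space), not merely a within-cluster phenomenon, which is what makes the parity argument work cleanly. Second, in the $SBV_2^0$-closedness paragraph your one-line appeal to H\"older does implicitly need the same cluster analysis: the trace $X(t-)$ is the common limit of the traces $X^k(t^k_i-)$ over all indices $i$ in the cluster, controlled uniformly by the $L^2$ velocity bound, and every one of these lies on the closed set $\partial\R^d_\pm$. Your argument is correct, but it would be slightly more honest to say you are reusing the cluster machinery rather than a single application of H\"older.
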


\begin{proof}
Since all of $L^1([0,1;\R^d_- \sqcup \R^d_+)$, $L^2([0,1];\R^d)$ with the weak topology, and $\M([0,1])$ with the weak-$\ast$ topology are metrizable and complete, these properties are inherited by $SBV_2^\div$:

If $X^k \to X$ in $L^1([0,1];\R^d_- \sqcup \R^d_+)$, $\dot X^k \rightharpoonup V \in L^2([0,1];\R^d)$, and $\sum_{t\in J_{X^k}} \sigma(t) \delta_t \weakstar \nu \in \M((0,1))$ vaguely, then $\dot X = V$, $\#  J_X = |\nu|((0,1))$, and $\sum_{t\in J_X} \sigma(t) \delta_t = \nu|_{(0,1)}$.

This shows that weak convergence in $SBV_2^\div$ is metrizable and complete. For separability, note that while $\M([0,1])$ is not separable, its subset $\{\sum_{t\in J} \sigma(t) \delta_t\,:\, J\subset[0,1]$ finite$, \sigma(t) \in \{\pm 1\}\}$ is. The fact that $SBV_2^\div = \bigcup_{M\in\N} A_M$ follows from the definition. The weak sequential compactness of $A_M$ also follows from the above argument.
\end{proof}

In the presence of a membrane, we see that some -- but not all -- particles at the membrane, will jump between the upper and lower half spaces. We model this using a stochastic jump process with rate determined by the ratio of the flux $f$ and the density of $\rho^\pm$.

\begin{prop}\label{prop: superposition}
Let $(\rho_t^-,\rho_t^+)_{t\in[0,1]} \subset \M_+(\R^d_- \sqcup \R^d_+)$ be a curve with finite limit action and finite mass, with $\partial_t \rho_t^\pm + \divergence V_t^\pm \pm f_t\Hm^{d-1}|_{\partial \R^d_\pm} = 0$. Then there exists a measure $P\in \M_+(SBV_2^0)$ with mass $P(SBV_2^0) = \rho_0^-(\R^d_-) + \rho_0^+(\R^d_+)$ such that the following hold:
\begin{itemize}
\item $X_t \sim (\rho_t^-, \rho_t^+)$ for every $t\in [0,1]$.
\item $E[\int_0^1 |\dot X_t|^2\,dt] \leq \sum_{\pm} \int_0^1 \int_{\R^d_\pm}\left|\frac{dV_t^\pm}{d\rho_t^\pm}\right|^2\,d\rho_t^\pm \,dt$.
\item The Borel measures $F_\pm\in \M_+([0,1] \times \partial \R^d_\pm)$ defined as $F_\pm(A) = E[\#\{t\in J_X\,:\,(t,X_{t^-})\in A\}]$ are absolutely continuous with respect to $dt\otimes d\Hm^{d-1}|_{\partial \R^d_\pm}$ with densities $g_t^\pm$ satisfying $g_t^\pm(\tilde x) \leq (f_t(\tilde x))^\pm$ for almost every $(t,\tilde x)$.
\end{itemize}
\end{prop}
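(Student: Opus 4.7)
The plan is to reduce to the classical Smirnov/Ambrosio--Crippa superposition principle by replacing the membrane with a thin high-mobility strip of thickness $\eps$ that carries the flux $f_t$, applying the standard superposition to the regularized no-membrane problem, and then collapsing the strip to recover the jumps.

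For $\eps>0$, identify $\R^d_-$ with $\R^{d-1}\times(-\infty,0]$ and $\R^d_+$ with $\R^{d-1}\times[\eps,\infty)$, inserting a strip $S_\eps \coloneqq \R^{d-1}\times(0,\eps)$ between them so that $\R^d_- \sqcup \R^d_+$ embeds into $\R^d$. On $\R^d$, define $\tilde\rho^\eps_t$ whose restriction to $\R^d_-$ is $\rho^-_t$, to $S_\eps$ is $\alpha\eps\,dx$ (exactly the maximal density allowed in the original membrane problem), and to $\R^{d-1}\times[\eps,\infty)$ is the $\eps e_d$-translate of $\rho^+_t$. Similarly let $\tilde V^\eps_t$ have restrictions $V^-_t$ below, $f_t(\tilde x)\,e_d\,dx$ on $S_\eps$, and the $\eps e_d$-translate of $V^+_t$ above. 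Because both $\tilde\rho^\eps$ and the $e_d$-component of $\tilde V^\eps$ are independent of $x_d$ on $S_\eps$, the normal traces at $\{x_d=0\}$ and $\{x_d=\eps\}$ exactly cancel the original $\pm f_t\Hm^{d-1}|_{\partial\R^d_\pm}$ terms, so the unconstrained continuity equation $\partial_t\tilde\rho^\eps_t+\divergence\tilde V^\eps_t=0$ holds on $\D'((0,1)\times\R^d)$. A short computation using $|\tilde V^\eps_t|^2/\tilde\rho^\eps_t = f_t^2/(\alpha\eps)$ on $S_\eps$ and integrating in $x_d$ shows that the kinetic action of $(\tilde\rho^\eps,\tilde V^\eps)$ on $\R^d$ equals the membrane action on the right-hand side of the second bullet, independently of $\eps$.

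Apply the classical superposition principle (\cite{Ambrosio-Crippa,smirnov1994}) to $(\tilde\rho^\eps,\tilde V^\eps)$ to obtain $\tilde P^\eps\in\M_+(C([0,1];\R^d))$ with $(e_t)_\#\tilde P^\eps = \tilde\rho^\eps_t$ for every $t$ and kinetic action equal to the above. Define a collapse map $c_\eps$ sending each continuous trajectory $\gamma$ to a curve $X^\eps\in SBV_2^\div$ by projecting $\gamma$ outside $S_\eps$ onto $\R^d_- \sqcup \R^d_+$ (lower unchanged, upper translated by $-\eps e_d$), and replacing each maximal excursion of $\gamma$ into $S_\eps$ by a single jump at its midpoint with traces on $\partial\R^d_\pm$. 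Crucially, $\tilde V^\eps_t\cdot e_d/\tilde\rho^\eps_t = f_t(\tilde x)/(\alpha\eps)$ has a fixed sign at each $\tilde x$, so no trajectory reverses direction inside $S_\eps$ and every excursion is a single unidirectional traversal; hence $c_\eps(\gamma)\in SBV_2^0$. Setting $P^\eps\coloneqq (c_\eps)_\#\tilde P^\eps$, properties (i) and (ii) hold at the pre-limit level by construction, and the one-sided flux bound (iii) is immediate from the fixed-sign property together with conservation of mass through the strip. Now let $\eps\to 0$: the uniform kinetic action bound, the $L^1$ bound on the total jump measure by $\int_0^1\int_{\R^{d-1}}|f_t|\,d\tilde x\,dt$, and the tightness of the initial marginals provide the three compactness ingredients required by Lemma \ref{lem: SBV properties}. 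Extract a weak subsequential limit $P$; since $SBV_2^0$ is weakly closed in $SBV_2^\div$, $P$ is supported on $SBV_2^0$. Properties (i)--(iii) transfer to the limit by lower semicontinuity of $\int_0^1|\dot X_t|^2\,dt$ and of the jump densities, and by continuity of $(e_t)_\#$ for a.e.\ $t$.

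The main obstacle is the uniform tightness of the jump count $\#J_{X^\eps}$ under $P^\eps$, required to apply the compactness statement of Lemma \ref{lem: SBV properties}. It relies essentially on the fixed-sign, $x_d$-independent construction of $\tilde V^\eps$ on $S_\eps$: without it, trajectories could oscillate through the strip and inflate the jump count as $\eps\to 0$. With this construction in place, the expected total jump count is bounded by $\int_0^1\int_{\R^{d-1}}|f_t|\,d\tilde x\,dt$, which is finite by Cauchy--Schwarz combined with the finite total mass, and the remainder of the argument is a routine weak lower semicontinuity and marginal verification.
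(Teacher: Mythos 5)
Your strategy is conceptually different from the paper's. You insert a thin high-mobility strip $S_\eps$, fill it with maximal density $\alpha\eps$, apply the classical Smirnov/Ambrosio--Crippa superposition to the resulting continuity equation on $\R^d$, and then collapse strip excursions into jumps. The paper instead mollifies $\rho_t^\pm$, $V_t^\pm$, $f_t$ to strictly positive smooth data, constructs the trajectory measure directly as the law of a Markov process (an ODE flow in each half-space coupled to a Poisson jump process whose rate is governed by $(\pm f_t)_+/\rho_t^{\pm\eps}$), identifies its one-time marginals via uniqueness for the Cauchy problem, and then passes to the limit in $\eps$. Your route would be an attractive alternative if it worked, but there is a gap that it does not currently get past.

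The central problem is that $\tilde\rho_t^\eps$ has infinite total mass: the strip carries the density $\alpha\eps$ on all of $\R^{d-1}\times(0,\eps)$, so $\tilde\rho_0^\eps(S_\eps)=\infty$. The classical superposition principle requires a finite (or normalized) mass solution of the continuity equation, and Prokhorov-type tightness for $\tilde P^\eps$ or for $P^\eps=(c_\eps)_\#\tilde P^\eps$ makes no sense for infinite measures. As a consequence, the claimed identities $P(SBV_2^0)=\rho_0^-(\R^d_-)+\rho_0^+(\R^d_+)$ and $X_t\sim(\rho_t^-,\rho_t^+)$ cannot hold even at the pre-limit level: after collapsing, the law of $X_t^\eps$ carries an extra (infinite) contribution supported on $\partial\R^d_\pm$ coming from trajectories born inside the strip. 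Truncating the strip density to a set where $\int_0^1|f_t|\,dt>0$ does not resolve this in general, because $f\in L^2([0,1]\times\R^{d-1})$ need not have finite-measure support. You would need some version of the mollification-and-Gaussian-tail device the paper uses precisely to control this, or a genuinely local/$\sigma$-finite variant of the superposition theorem, and neither is supplied.

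There are further, smaller issues worth flagging. First, the ``fixed-sign'' assertion for the vertical velocity $f_t(\tilde x)/(\alpha\eps)$ is only true at a fixed $t$; as $t$ varies, $f_t(\tilde x)$ may change sign, so a trajectory can reverse inside $S_\eps$ and exit from the side it entered. Those excursions must \emph{not} produce jumps under $c_\eps$, otherwise the image curve fails the mirrored-traces condition of $SBV_2^0$; your description ``each maximal excursion becomes a jump'' should be restricted to traversing excursions. Second, the claim that the kinetic action of $(\tilde\rho^\eps,\tilde V^\eps)$ equals the right-hand side of bullet (ii) is off: the action also includes the strip term $\tfrac1\alpha\int_0^1\int f_t^2$, and to get the sharper bound in (ii) one has to use the pathwise ODE property from the superposition theorem to decompose the action into strip and non-strip parts and drop the strip time. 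Finally, the Cauchy--Schwarz bound $\int_0^1\int_{\R^{d-1}}|f_t|\,d\tilde x\,dt<\infty$ cited for tightness of the jump count needs to be localized to bounded sets of $\tilde x$ (exactly as the paper does with the ball $\tilde B(0,2M+\eps)$), since the global estimate fails on an unbounded domain.
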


Note that for all nonnegative measures $P\in \M_+(SBV_2^0)$ with $E[\int_0^1 |\dot X_t|^2\,dt]<\infty$ and $\sum_\pm \int_0^1 \int_{\partial \R^d_\pm} |g_t^\pm|^2\,d\Hm^{d-1}\,dt< \infty$, the laws $ (\rho_t^-, \rho_t^+)=E[\delta_{X_t}]$ have finite limit action, with $\partial_t \rho_t^\pm + \divergence V_t^\pm +  (g_t^\pm - g_t^\mp\circ S)\Hm^{d-1}|_{\partial \R^d_\pm}=0$, where $(V_t^-,V_t^+)=E[\dot X_t\delta_{X_t}]$, and
\begin{align}
 \sum_{\pm} \int_0^1 \int_{\R^d_\pm}\left|\frac{dV_t^\pm}{d\rho_t^\pm}\right|^2\,d\rho_t^\pm\,dt \leq E[\int_0^1 |\dot X_t|^2\,dt] 
\end{align}
by Jensen's inequality.

For the proof, we follow the argument in \cite[Theorem 4.4]{Ambrosio-Crippa}.

\begin{proof}
\emph{Step 1:} Instead of $(\rho_t^-, \rho_t^+)_{t\in[0,1]}$ we consider the mollified versions $\rho_t^{\pm\eps}(dx) \coloneqq \rho_t^\pm \ast \phi^{\pm\eps}(dx) + \eps e^{-|x|^2}(dx)|_{\R^d_{\pm}}$, where $\phi^{\pm\eps}\in \mathcal C_c^\infty(B(\pm \eps e_d, \eps))$ is a Dirac sequence with $\phi^{-\eps}\circ S = \phi^{+\eps}$. 

We note that after the mollification, we have $\rho_t^{\pm \eps} \in \mathcal C^\infty(\R^d_{\pm})$, Lipschitz, and strictly positive. If $\partial_t \rho_t^\pm + \divergence V_t^\pm  \pm f_t\Hm^{d-1}|_{\partial \R^d_\pm}=0$, then setting $V_t^{\pm\eps} = V_t^\pm \ast \phi^{\pm\eps}$, $v_t^{\pm\eps} = V_t^{\pm\eps}/\rho_t^{\pm\eps}$, and $g_t^{\pm\eps} = \pm f_t\Hm^{d-1}|_{\partial \R^d_{\pm}} \ast \phi^{\pm\eps}$, we have
\begin{equation}\label{eq: smooth equation}
\partial_t \rho_t^{\pm\eps} + \divergence (\rho_t^{\pm\eps} v_t^{\pm\eps}) + g_t^{\pm\eps}=0,
\end{equation}
 with $v_t^{\pm\eps}$ locally Lipschitz and satisfying the boundary values $v_t^{\pm\eps} = 0$ on $\partial \R^d_\pm$ since $V_t^{\pm\eps}=0$ on $\partial\R^d_\pm$ and $\rho_t^{\pm\eps}>0$ in $\R^d_\pm$. By Jensen's inequality and the convexity of $(V,\rho)\mapsto\frac{|V|^2}{\rho}$ in $\R^d\times(0,\infty)$ we may estimate
\begin{align}
\int_{\R^d_\pm} |v_t^{\pm\eps}|^2\,d\rho_t^{\pm\eps}\,dt \leq  \int_{\R^d_\pm} \left|\frac{d V_t^{\pm}}{d\rho_t^\pm}\right|^2\,d\rho_t^{\pm}\,dt. 
\end{align}

We note that $g_t^{\pm\eps}$ is no longer supported on the boundary but in a $2\eps$-neighborhood of the same.

We now define a random curve $X \in SBV_2^\div$. First, its starting point $X_0\in \R^d_- \sqcup \R^d_+$ is distributed according to $(\rho_0^{-\eps},\rho_0^{+\eps})$. Independently of the starting point, take a random realization of the $1$-Poisson process, yielding discrete times $\cT = \{t_i\}_{i\in \N} \subset [0,\infty)$. Then define the random curve $(X_t,T_t)\colon [0,1]\to \R^d_-\sqcup \R^d_+\times [0,\infty)$ as the solution to the ODE
 \begin{align}
 \begin{cases}
 X_0 = X_0\\
T_0 = 0\\
 \dot X_t = v^{\sigma(T_t)\eps}_t (X_t)\\
 \dot T_t = (g_t^{\sigma(T_t)\eps}(X_t))_+/\rho_t^{\sigma(T_t)\eps}(X_t).
 \end{cases}
 \end{align}
 
 Here $\sigma:[0,\infty) \to \{-1,1\}$ is the function indicating whether $X_t$ is in the lower or upper half-space, with $\sigma(0)$ determined by the starting half-space of $X_0$ and jump set $J_\sigma = \cT$. Clearly $X\in SBV_2^\div$ almost surely. In particular, if $X_t$ is in the lower half-space, it jumps to the upper half-space whenever $T_t=t_i$ and vice versa. Because its derivative is nonnegative, $T_t$ is nondecreasing.
 
 By using It\^o's formula for semimartingales with jumps (Section 2.1 in \cite{protter}) we see that the distribution $X_t \sim (\mu_t^{-\eps},\mu_t^{+\eps})$ solves the Cauchy problem
 \begin{align}
 \begin{cases}
 \mu_0^{\pm\eps} = \rho_0^{\pm\eps}\\
 \partial_t \mu_t^{\pm\eps}  + \divergence (\mu_t^{\pm\eps} v_t^{\pm\eps}) +  \frac{(g_t^{\pm\eps})_+}{\rho_t^{\pm\eps}}\mu_t^{\pm\eps} - \frac{(g_t^{\mp\eps})_+}{\rho_t^{\mp\eps}}\mu_t^{\mp\eps} \circ S  = 0, 
 \end{cases}
 \end{align}
 as does $(\rho_t^{-\eps},\rho_t^{+\eps})$, since $(g_t^{\pm\eps})_+ - (g_t^{\mp\eps})_+\circ S = g_t^{\pm\eps}$, and $(\rho_t^{-\eps},\rho_t^{+\eps})$ solves \eqref{eq: smooth equation}. Because the solution is unique by the Cauchy-Kovalevskaya theorem, we have $\mu_t^{\pm\eps} = \rho_t^{\pm\eps}$ for every $t\in[0,1]$. We take $P^\eps\in \M_+(SBV_2^\div)$ to be the law of $X$. Defining for a Borel $A\subset [0,1]\times \R^d_\pm$ the nonnegative measure $F^{\pm\eps}(A) \coloneqq E^\eps[ \#  \{t\in J_X\,:\,(t,X_{t^-})\in A\}]$, we note that $F^{\pm\eps}$ is absolutely continuous with respect to $dt\otimes dx$ with density $g_t^{\pm\eps}(x)$ according to the construction.\\
\emph{Step 2:}
The next step is to show that the $P^\eps \in \M_+(SBV_2^\div)$ are tight. To this end we use the weakly sequentially compact sets $A_M$ from Lemma \ref{lem: SBV properties} and show that $\lim_{M \to \infty} \sup_{\eps > 0} P^\eps(SBV_2^\div \setminus A_M) = 0$. We check each of the three conditions defining $A_M$:

 \begin{align}
 \sup_{\eps > 0}P^\eps(|X_0| > M) = \sup_{\eps > 0} \rho^\eps_0(\R^d_- \setminus \overline{B(0,M)} \sqcup \R^d_+ \setminus \overline{B(0,M)} )\to_{M\to \infty} 0,
 \end{align}
 since the $(\rho^\eps_0)_{\eps >0}\subset \M_+(\R^d_- \sqcup \R^d_+)$ are tight, where $\rho_0^\eps=(\rho_0^{-\eps},\rho_0^\eps)$.

 For the second condition, this follows from the finity of the transport part of the energy and Markov's inequality:
 
 \begin{align}
 \sup_{\eps > 0} P^\eps(\int_0^1 |\dot X_t|^2\,dt > M^2) \leq \sup_{\eps > 0} \frac{1}{M^2} E^\eps[\int_0^1 |\dot X_t|^2\,dt] \to_{M\to \infty} 0.
 \end{align}

 For the third condition, this follows from the finity of the membrane part of the energy and H\"older's and Markov's inequalities. In order to use H\"older's inequality, we note that if $|X_0|\leq M$ and $\int_0^1 |\dot X_t|^2 \,dt\leq M^2$, then $|X_t|\leq 2M$ for all $t$, independently of the jump set. Thus,
 
 \begin{align}
 \begin{aligned}
&\sup_{\eps \in (0,1)} P^\eps(\sup_t |X_t|\leq 2M, \#  J_X > M^d) \leq \sup_{\eps > 0} \frac{1}{M^d}E^\eps[\#  J_X \1_{\sup_t |X_t| \leq 2M}]\\
 \leq & \sup_{\eps \in (0,1)}\frac{1}{M^d} \int_0^1 \int_{ B(0,2M)} |g_t^{-\eps}( x)| + |g_t^{+\eps}( x)|\,d x\, dt\\
 \leq & \sup_{\eps \in (0,1)}\frac{2}{M^d} \int_0^1 \int_{\tilde B(0,2M+\eps)} |f_t|(\tilde x)\,d\tilde x\, dt\\
 \leq &\sup_{\eps \in (0,1)} C(d)\frac{ (M+\eps)^{(d-1)/2}}{M^d} \left(\int_0^1 \int_{\tilde B(0,2M+2\eps)} f_t^{2}(\tilde x) \,d\tilde x\, dt\right)^{1/2} \to_{M\to \infty} 0.
 \end{aligned}
 \end{align}
 
 This shows that the $(P^\eps)_{\eps > 0} \subset \M_+(SBV_2^\div)$ are weakly tight, so that by Prokhorov's theorem they have a weakly convergent subsequence $P^\eps \weakstar P\in \M_+(SBV_2^0)$. It is easily seen  that the law of $X_t$ under $P$ is $X_t\sim (\rho_t^-,\rho_t^+)$. Because the pathwise energy is weakly-$\ast$ lower semicontinuous, it follows from the Portmanteau theorem
 \begin{align}
 E\left[\int_0^1 |\dot X_t|^2\,dt\right] \leq \liminf_{\eps \to 0} E^\eps\left[\int_0^1 |\dot X_t|^2\,dt\right].
 \end{align}
 For the membrane part, note that as $P^\eps \weakstar P$, we have for any relatively open $A\subset [0,1]\times \R^d_\pm$ that
 \begin{align}
 F^\pm(A) \coloneqq E[\# \{t\in J_X\,:\,(t,X_{t^-})\in A\}] \leq \liminf_{\eps\to 0} F^{\pm\eps}(A),
 \end{align}
 since $X\mapsto \# \{t\in J_X\,:\,(t,X_{t^-})\in A\}$ is weakly sequentially lower semicontinuous. On the other hand, clearly $F^{\pm\eps} \weakstar(\pm f_t(\tilde x))_+ dt\otimes d\Hm^{d-1}|_{\partial \R^d_\pm}(\tilde x)$, so that $F^\pm$ is absolutely continuous with density at most $(\pm f_t(\tilde x))_+$.

\end{proof}

\begin{eg}
Take $\rho^\pm = \frac{1}{\omega_{d-1}R^{d-1}} \Hm^{d-1}|_{\partial \R^d_\pm \cap B(0,R)}$ and take $\rho_t^- = (1-t)\rho^-$, $\rho_t^+ = t \rho^+$. Then $V_t^\pm = 0$, $f_t(\tilde x) = \frac{1}{\omega_{d-1}R^{d-1}} \1_{B(0,R)}(\tilde x)$ in the continuity equation. The probability measure $P\in \P(SBV_2^0)$ is the uniform distribution on the curves
\begin{align}
X_t = \begin{cases} x_0, &\text{ if }0\leq t \leq t_0\\ Sx_0, &\text{ if }t_0 < t \leq 1\end{cases} 
\end{align}
for $t_0\in [0,1], x_0\in \partial \R^d_- \cap B(0,R)$. We see that there is no way to choose the jump times deterministically.
\end{eg}

We now use this Lagrangian representation to prove the upper bound. Roughly, instead of having particles teleport across the membrane of width $\eps >0$, we replace a particle entering the membrane from one side with a different one exiting on the other side. This technique is inspired by the magical illusion ``The Tranported Man" from the novel \textit{The Prestige}\cite{prestige}, where instead of actually teleporting himself, a magician simply exits the stage as his identical twin brother enters it at the same time. The illusion is the difference between the Eulerian and the supposed Lagrangian formulation of the transport. 

\begin{proof}[Proof of the upper bound]
Take a curve with finite limit action $(\rho_t^-, \rho_t^+)_{t\in [0,1]}$ and represent it using $P\in \M_+(SBV_2^0)$ as in Proposition \ref{prop: superposition}. We shall modify these paths to pay heed to the finite thickness of the membranes. To this end, we modify the curves in $\supp P$ as follows:

For any measure $F^\pm \in \M_+([0,1]\times \partial \R^d_\pm)$ with absolutely continuous density $dF^\pm = f_t^\pm(\tilde x)\,(dt \otimes d\Hm^{d-1}(\tilde x))$, define a stopping time $\tau_F:SBV_2^0 \to [0,\infty]$ through
\begin{equation}
\tau_F(X) \coloneqq \inf\left\{t\in J_X\,:\, \int_0^t f_s^-(\tilde X_t) + f_s^+(\tilde X_t)\,ds < \alpha \eps^2\right\}.
\end{equation}
Note that $\tau_F$ is Borel-measurable and decreasing in $F$.

On the other hand, given a measurable stopping time $\tau:SBV_2^0 \to [0,\infty]$, define a Borel measure $F_\tau^\pm\in \M_+([0,1]\times \partial \R^d_\pm)$ through
\begin{equation}
F_\tau^\pm(A) \coloneqq E[\#\{t\in J_X\,:\,t\leq \tau(X), (t,X_{t^-})\in A\}],
\end{equation}
where the expectation is taken with respect to $P$. Note that $F_\tau$ is decreasing in $\tau$ and $F_\tau^\pm \leq F^\pm$. In particular, $F_\tau^\pm$ is absolutely continuous.

Define for every $\eps>0$ first $\tau_0 \coloneqq \infty$, then $F_k^\pm \coloneqq F_{\tau_k}^\pm\in \M_+([0,1]\times \partial \R^d_\pm)$ and $\tau_{k+1} \coloneqq \tau_{F_k}$. Then $(\tau_k)_{k\in\N} \subset [0,\infty]^{SBV_2^0}$ forms a nonincreasing sequence of Borel-measurable stopping times and converges pointwise to some Borel-measurable $\tau_\eps:SBV_2^0\to[0,\infty]$. Likewise, $(F_k^\pm)_{k\in\N} \subset \M_+([0,1]\times \partial \R^d_\pm)$ forms a nonincreasing sequence and converges weakly-$\ast$ to a a limit measure $F^{\pm\eps} \in \M_+([0,1]\times \partial \R^d_\pm)$ with density $f_t^{\pm\eps}(\tilde x) \leq f_t^\pm(\tilde x)$ for  every $(t,\tilde x)\in [0,1]\times \partial \R^d_\pm$. By continuity, we then have
\begin{equation}\label{eq: stopping time}
\tau_\eps(X) = \inf\{t\in J_X\,:\, \int_0^t f_s^{-\eps}(\tilde X_t) + f_s^{+\eps}(\tilde X_t)\,ds < \alpha \eps^2\}
\end{equation}
for $P$-almost every $X\in SBV_2^0$, and
\begin{equation}\label{eq: flux measure}
F^{\pm\eps}(A) = E[\#\{t\in J_X\,:\,t \leq \tau_\eps(X), (t,X_{t^-})\in A\}].
\end{equation}

Now we define the stopped process $SBV_2^0 \ni X \mapsto X^\eps \in SBV([0,1];\R^d)$ through
\begin{equation}
X^\eps_t \coloneqq \begin{cases}
X_t + \eps e_d, &\text{ if }t\leq\tau_\eps(X), X_t\in \R^d_+\\
X_{\tau_\eps(X)^-}+(\eps - \frac{1}{\alpha\eps}\int_0^{\tau_\eps(X)} f^{+\eps}_s(\tilde X_{\tau_\eps(X)^-})\,ds )e_d, &\text{ if }t > \tau_\eps(X), X_{\tau_\eps(X)^-}\in \R^d_+\\
X_{\tau_\eps(X)^-}+ \frac{1}{\alpha\eps}\int_0^{\tau_\eps(X)} f^{-\eps}_s(\tilde X_{\tau_\eps(X)^-})\,ds\, e_d, &\text{ if }t > \tau_\eps(X), X_{\tau_\eps(X)^-}\in \R^d_-\\
X_t,           &\text{ if }t\leq\tau_\eps(X), X_t\in \R^d_-.
\end{cases}
\end{equation}

\begin{figure}
\begin{center}
\includegraphics{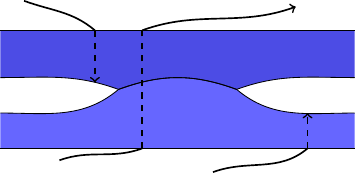}
\caption{The stopped curves $X_t^\eps$ end in the membrane the first time they try to cross at a point where the membrane is not yet filled, increasing the size of the filled region $U_t^\eps$.}\label{fig: membrane}
\end{center}
\end{figure}

We note that in the stopped process, the first $\alpha \eps^2$ particles to attempt a jump across the membrane at $\tilde x$ are instead frozen inside the membrane. This allows us to easily construct the recovery sequence as follows:
\begin{equation}
(\rho_t^\eps)_{t\in [0,1]} \coloneqq (E[\delta_{X^\eps_t}])_{t\in [0,1]} \subset \M_+(\R^d).
\end{equation}

We define the momentum field first as a measure $V\in \M([0,1]\times \R^d;\R^d)$ and later show that $V$ is absolutely continuous in time:
\begin{equation}
V^\eps \coloneqq E[\dot X_t^\eps (dt \otimes \delta_{X_t^\eps})] + E\left[\sum_{t\in J_{X_t^\eps}} \frac{[X_t^\eps]}{|[X_t^\eps]|} (\delta_t \otimes \Hm^1|_{[X_{t^-}^\eps,X_{t^+}^\eps]})\right].
\end{equation}
Here $[X_t^\eps]\in \R^d$ denotes the jump of $X_t^\eps$, which is always parallel to $e_d$.

By the linearity of the continuity equation, it is clear that $\partial_t \rho_t^\eps + \divergence V^\eps = 0$ in $\D'((0,1)\times \R^d)$.

Define $U_t^\eps \subset \R^{d-1}\times (0,\eps)$ as the set
\begin{align}
U_t^\eps = \left\{(\tilde x, x_d)\in \R^{d-1} \times (0,\eps)\,:\,x_d \leq \frac{1}{\alpha \eps} \int_0^t f_s^{-\eps}(\tilde x)\,ds\text{ or }\eps-x_d \leq \frac{1}{\alpha\eps}\int_0^t f_s^{+\eps}(\tilde x)\,ds\right\}.
\end{align}

We claim that $\rho_t^\eps|_{\R^{d-1}\times (0,\eps)} = \alpha \eps \Lm^d|_{U_t^\eps}$, see Figure \ref{fig: membrane}. We test this against cylindrical sets $\tilde A \times I$, with $\tilde A \subset \R^{d-1}$ and $I\subset (0,\eps)$ Borel:
\begin{equation}
\begin{aligned}
&\rho_t^\eps(\tilde A \times I)\\
 = & P(\tau_\eps(X)<t,\tilde X_{\tau_\eps(X)} \in \tilde A, X^\eps_{\tau_\eps(X)^+}\cdot e_d\in I)\\
= & F^{-\eps} \left(\left\{(s,\tilde x)\,:\, s\leq t, \tilde x \in \tilde A,\frac{1}{\alpha\eps} \int_0^s f_r^{-\eps}(\tilde x)\,dr \in I, \int_0^s f_r^{-\eps}(\tilde x) + f_r^{+\eps}(\tilde x)\,dr < \alpha \eps^2 \right\} \right)\\
& + F^{+\eps} \left(\left\{(s,\tilde x)\,:\, s\leq t, \tilde x \in \tilde A,\frac{1}{\alpha\eps} \int_0^s f_r^{-\eps}(\tilde x)\,dr \in \eps - I, \int_0^s f_r^{-\eps}(\tilde x) + f_r^{+\eps}(\tilde x)\,dr < \alpha \eps^2 \right\} \right)\\
= & \int_{\tilde A} \int_0^t \left(f_s^{-\eps}(\tilde x) \mathds{1}_{\{\frac{1}{\alpha\eps}\int_0^s f_r^{-\eps}(\tilde x)\,dr \in I\}} + f_s^{+\eps}(\tilde x) \mathds{1}_{\{\frac{1}{\alpha\eps} \int_0^s f_r^{+\eps}(\tilde x)\,dr \in \eps - I\}} \right) \mathds{1}_{\{\int_0^s f_r^{-\eps}(\tilde x) + f_r^{+\eps}(\tilde x)\,dr < \alpha \eps^2\}}\,ds\,d\tilde x\\
 = &\alpha \eps\Lm^d((\tilde A \times I) \cap U_t^\eps).
 \end{aligned}
\end{equation}
Here we used \eqref{eq: stopping time}, \eqref{eq: flux measure}, Fubini's theorem, and the change of variables formula. The claim is shown. In particular, $\rho_t^\eps \leq h^\eps$. It follows that $\rho_t^\eps|_{\R^{d-1}\times (-\infty,0]} \weakstar \rho_t^-$ and $\rho_t^\eps|_{\R^{d-1}\times (0,\infty))} \weakstar \rho_t^+$ for every $\eps > 0$, since $P(\tau_\eps(X)<\infty, |X_1^\eps| < R) \leq C(d)R^{d-1}\alpha \eps^2$, whereas by Prokhorov's theorem $P(|X_1^\eps| > R) \to 0$ as $R\to \infty$. All in all, $|\rho_t^\eps - \rho_t| \to 0$.

Finally, we have to estimate the action. Outside of the membrane, this is simply Jensen's inequality:
\begin{equation}\label{eq: outside energy}
\int_0^1 \int_{\R^d \setminus (\R^{d-1} \times (0,\eps))} \left|\frac{dV_t^\eps}{d\rho_t^\eps}\right|^2\,d\rho_t^\eps\,dt \leq E\left[\int_0^1 |\dot X_t|^2\,dt\right].
\end{equation}

Inside the membrane, we first note that $V^\eps$ is absolutely continuous with respect to $(t,x)$, with density
\begin{equation}
V^\eps(dt \otimes dx) = \mathds{1}_{U_t^\eps}(x)(f_t^{+\eps}(\tilde x) - f_t^{-\eps}(\tilde x))e_d\,(dt \otimes dx),
\end{equation}
so that
\begin{equation}\label{eq: inside energy}
\int_0^1 \int_{\R^{d-1} \times (0,\eps)} \frac{|V_t^\eps(x)|^2}{\rho_t^\eps(x)}\,dx\,dt = \int_0^1\int_{U_t^\eps}\frac{(f_t^{+\eps}(\tilde x) - f_t^{-\eps}(\tilde x))^2}{\alpha \eps}\,dx\,dt \leq \frac{1}{\alpha}\int_0^1 \int_{\R^{d-1}} f_t^2(\tilde x)\,d\tilde x\,dt. 
\end{equation}

Combining \eqref{eq: outside energy} with \eqref{eq: inside energy} yields the upper bound
\begin{equation}
\int_0^1 \int_{\R^d} \left|\frac{dV_t^\eps}{d\rho_t^\eps}\right|^2\,d\rho_t^\eps\,dt \leq E_0((\rho_t^-,\rho_t^+)_{t\in[0,1]}).
\end{equation}

\end{proof}

\begin{eg}
Consider $d=1$ and set $\rho_t^- = (1-t)\delta_0$ and $\rho_t^+ = t\delta_0$. This is an optimal curve connecting its two end points. The flux is $f(t) = 1$ and the cost is simply $\frac{1}{\alpha}$.

Another optimal curve is given by $\rho_t^- = \mathds{1}_{[-1+t,0]}$, $\rho_t^+ = \mathds{1}_{[0,t]}$. This curve is also optimal, with the same flux $f(t) = 1$ and cost $1 + \frac{1}{\alpha}$.

For $d>1$, the optimal curve between $\rho_0 = (\delta_0, 0)$ and $\rho_1 = (0,\delta_0)$ is supported on the curves
\begin{equation}
X_t = \begin{cases}
      \frac{t}{t_0}\tilde x_0 &\text{, if }t\leq t_0\\
      \frac{1-t}{1-t_0}S\tilde x_0 &\text{, if }t>t_0,
      \end{cases}
\end{equation}
with $t_0\in[0,1]$ and $\tilde x_0 \in \partial \R^d_-$. The distribution $\mu(dt_0,d\tilde x_0) \in\P([0,1]\times \partial \R^d_-)$ of crossing coordinates $(t_0,\tilde x_0)$ then minimizes
\begin{equation}
E\left[|\tilde x_0|^2 \left(\frac{1}{t_0}+\frac{1}{1-t_0}\right)\right] + \int_0^1 \int_{\partial \R^d_-} \frac{1}{\alpha}\left(\frac{d\mu}{d(t_0,\tilde x_0)}\right)^2\,d\tilde x_0\,dt_0
\end{equation}
among all probability measures. A simple calculation shows that
\begin{equation}
\frac{d\mu}{d(t_0,\tilde x_0)} = \left(c(d,\alpha) - \frac{\alpha}{2}|\tilde x_0|^2\left(\frac{1}{t_0}+\frac{1}{1-t_0}\right)\right)_+\, ,
\end{equation}
with $c(d, \alpha) > 0$ chosen uniquely so that $\mu$ is a probability measure. We note that for $d=1$, the only crossing point is $\tilde x_0 = 0$, and we recover $\frac{d\mu}{dt_0} = 1$.
\end{eg}

\section{Homogenization}\label{section: homogenization}
In this section we prove Theorem \ref{thm:homogenizationintro}, i.e. we will show that $E_{h_\eps}$ $\Gamma$-converges to $E_{\hom}$.

 Let us start by collecting a few properties of the functional $E_{\hom}$ defined in \eqref{eq: homogenized}.
\begin{lma}\label{lma: prophom}
The following properties hold:
\begin{enumerate}
\item [(i)] For all $m\in(0,\int_{\T^d} h(x)\, dx]$ there exist minimizers $\nu(m,U)\in L^1(\T^d)$ and
$W(m,U)\in L^2(\T^d;\R^d)$ of $f_{\hom}(m,U)$.
\item [(ii)] The map $(m,U)\mapsto f_{\hom}(m,U)$ is convex, lower semicontinuous, and $2$-homogeneous in $U$.
\item[ (iii)] $E_{\hom}$ is convex and lower semicontinuous.
\item [(iv)] There is a constant $C$ depending only on $\{h>0\}$ and $\alpha$ such that $\frac{|U|^2}{m} \leq f_{\hom}(m,U) \leq C \frac{|U|^2}{m}$ for all $U\in \R^d$, $m\in (0,\int_{\T^d} h(x)\,dx]$.
\item [(v)] If $m\leq \inf_{\T^d} h$ then $f_{\hom}(m,U)=\frac {|U|^2}{m}$.
\item [(vi)] With $C$ as above,
\begin{align}\label{eq: lipschi}
f_{\hom}(m,U+Z) \leq f_{\hom}(m,U) + C\frac{|U+Z||Z|}{m} ,
\end{align}
 for all $U,Z\in \R^d$, $m\in (0,\int_{\T^d} h(x)\,dx]$. In addition, $f_{\hom}$ is locally Lipschitz in $(0,\int_{\T^d}h(x)\,dx]\times \R^d$.
\end{enumerate}
\end{lma}
\begin{proof}
$(i)$ First note that $f_{\hom}(m,U)\geq 0$. We take minimizing sequences $(\nu^n)_{n\in \N} \subset L^\infty(\T^d)$, $(W_n)_{n\in \N} \subset L^2(\T^d;\R^d)$. Then $0\leq \nu^n(x) \leq h(x)$ almost everywhere and $\int \nu^n(x)\, dx=m$. By the Banach-Alaoglu Theorem there exists a subsequence $\nu^n$ converging weakly-$\ast$ in $L^\infty(\T^d)$ to some $\nu$ satisfying $0\leq \nu(x) \leq h(x)$ almost everywhere and $\int \nu(x)\, dx=m$. Since $\int_{\T^d} |W_n(x)|^2\,dx \leq \frac{1}{\alpha} \int \frac{|W_n(x)|^2}{\nu_n(x)}\,dx$, we also get a subsequence $W_n \rightharpoonup W$ in $L^2(\T^d;\R^d)$, with $\divergence W = 0$ in $\D'(\T^d)$ and $\int_{\T^d} W(x)\,dx = U$. By the convexity and lower semicontinuity of the function $(m, U) \mapsto \frac{|U|^2}{m}$ and Mazur's Lemma, we have
\begin{align}
\int_{\T^d} \frac{|W(x)|^2}{\nu(x)}\,dx \leq \liminf_{n \to \infty} \int_{\T^d} \frac{|W_n(x)|^2}{\nu_n(x)}\,dx,
\end{align}
which shows that $(\nu, W)$ are minimizers.

$(ii)$ These properties are inherited from the function $(\nu, W) \mapsto \frac{|W|^2}{\nu}$.

$(iii)$ This is the result of Lemma \ref{lma: lower semicontinuity}.

$(iv)$ The lower bound follows from Jensen's inequality. For the upper bound, consider the vector field $X_U\in L^2(\{h>0\};\R^d)$ from Lemma \ref{lma: potential flow} below, and find $\nu \in L^1(\T^d)$ such that $h(x) \geq \nu(x) \geq \min(m, \alpha)$ almost everywhere in $\{h>0\}$, and $\int_{\T^d} \nu(x) \,dx = m$. Then
\begin{align}
\int_{\T^d} \frac{|X_U(x)|^2}{\nu(x)}\,dx \leq \int_{\{h>0\}} \frac{|X_U(x)|^2}{\min(m,\alpha)}\,dx \leq C(h) \frac{|U|^2}{m}.
\end{align}

$(v)$ The lower bound is shown in $(iv)$. For the upper bound, take $\nu(x) = m$ and $W(x) = U$.

$(vi)$ This follows from $(ii)$ and $(iv)$: Let $p\in \partial_U^-f_{\hom}(m,U)$. Then
\begin{align}
C\frac{|U|^2}{m} \geq f_{\hom}(m,U+\frac{|U|}{|p|}p) - f_{\hom}(m,U) \geq |p||U|,
\end{align}
so that $|p|\leq C\frac{|U|}{m}$.

Now take $U,Z\in \R^d$, $m\in (0,\int_{\T^d} h(x)\,dx]$, $p\in \partial_U^-f_{\hom}(m, U+Z)$. Then
\begin{align}
f_{\hom}(m, U+Z) \leq f_{\hom}(m, U) - p\cdot Z \leq f_{\hom}(m, U) + C\frac{|U+Z||Z|}{m},
\end{align}
which is \eqref{eq: lipschi}. In addition, for $0<m_1<m_2\leq\int_{\T^d} h(x) \, dx$, we have
\begin{align}
0\geq f_{\hom}(m_2,U)-f_{\hom}(m_1,U)\geq f_{\hom}(m_1,U)\frac{m_1-m_2}{m_2}.
\end{align}

To see the first inequality, start with a minimizer $\nu_1 = \nu(m_1,U), W_1 = W(m_1,U)$. Then $(\nu_1 + (m_2 - m_1) \frac{h-\nu_1}{h-m_1}, W_1)$ is a competitor for $f_{\hom}(m_2, U)$.

To see the second inequality, start with a minimizer $\nu_2 = \nu(m_2,U), W_2 = W(m_2,U)$. Then $(\frac{m_1}{m_2} \nu_2, W_2)$ is a competitor for $f_{\hom}(m_1,U)$.

Together with the growth condition (iv) we obtain the local Lipschitz property on $(0,\int_{\T^d} h(x)\, dx]\times \R^d$.
\end{proof}

The following lemma turns out to be crucial.

\begin{lma}\label{lma: potential flow}
There is a constant $C>0$ depending only on $\{h>0\}$ such that for every $U\in \R^d$ there is a vector field $X_U\in \mathcal C_c^\infty (\T^d \cap \{h>0\};R^d)$ such that $\divergence X_U = 0$ in $\D'(\T^d)$, $\int_{\{h>0\}} X_U(x)\,dx = U$, and $\int_{\{h>0\}} |X_U(x)|^2\,dx \leq C|U|^2$.
\end{lma}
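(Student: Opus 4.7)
The plan is to use linearity to reduce the problem to the case $U = e_i$ for each standard basis vector, and then construct $X_{e_i}$ as the mollification of a closed $1$-current supported on a piecewise-linear curve inside $\{h>0\}$ that goes from $0$ to $e_i$ in $\R^d$.

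First I would observe that if vector fields $X_1,\dots,X_d$ satisfy the conclusion for $U = e_1,\dots,e_d$ with some constant $C_0$, then $X_U \coloneqq \sum_{i=1}^d U_i X_i$ is still smooth, compactly supported in $\T^d \cap \{h>0\}$, divergence-free on $\T^d$, and has mean $U$. The pointwise bound $|X_U|^2 \leq d|U|^2 \sum_i |X_i|^2$ (Cauchy-Schwarz) gives $\|X_U\|_{L^2}^2 \leq d C_0 |U|^2$, reducing the problem to the basis case.

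Fix $i\in\{1,\dots,d\}$. By Assumption 2 the open set $\{h>0\} \subset \R^d$ is connected, so there is a piecewise-linear (hence Lipschitz) curve $\gamma_i\colon [0,1] \to \{h>0\}$ with $\gamma_i(0) = 0$ and $\gamma_i(1) = e_i$. Its image is compact inside the open set $\{h>0\}$, so there exists $\delta_i > 0$ such that the $\delta_i$-tubular neighborhood of $\gamma_i([0,1])$ in $\R^d$ still lies in $\{h>0\}$. Projected to $\T^d$, $\gamma_i$ is a closed loop. Define the $\R^d$-valued Radon measure
\begin{equation}
T_i \coloneqq \int_0^1 \dot\gamma_i(t)\, \delta_{\gamma_i(t)}\, dt \in \M(\T^d;\R^d).
\end{equation}
For any $\phi \in \mathcal{C}^\infty(\T^d)$, $\Z^d$-periodicity yields $\langle T_i, \nabla \phi\rangle = \phi(e_i) - \phi(0) = 0$, so $\divergence T_i = 0$ in $\D'(\T^d)$ and $\int_{\T^d} dT_i = e_i$.

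Fix a standard mollifier $\rho_\eta \in \mathcal{C}_c^\infty(B(0,\eta))$ with $\int \rho_\eta = 1$ and $\eta < \delta_i$, and set $X_i \coloneqq T_i \ast \rho_\eta$. Then $X_i$ is smooth, supported in the $\eta$-neighborhood of $\gamma_i$ and hence in $\T^d \cap \{h>0\}$, satisfies $\divergence X_i = (\divergence T_i) \ast \rho_\eta = 0$, and $\int_{\T^d} X_i\, dx = e_i$. Parametrizing $\gamma_i$ by arc length on $[0,L_i]$ and applying Cauchy-Schwarz pointwise in $x$ gives
\begin{equation}
|X_i(x)|^2 \leq \left(\int_0^{L_i} \rho_\eta(x - \gamma_i(s))\, ds\right)^2 \leq L_i \int_0^{L_i} \rho_\eta(x - \gamma_i(s))^2\, ds,
\end{equation}
and integrating over $\T^d$ yields $\|X_i\|_{L^2(\T^d)}^2 \leq L_i^2 \|\rho_\eta\|_{L^2}^2$. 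Since $L_i$, $\delta_i$, and the choice of mollifier depend only on the geometry of $\{h>0\}$, this is the desired bound.

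The only non-routine step is the existence of the connecting curve, and this is precisely what the strong form of Assumption 2 (connectedness of $\{h>0\}$ in $\R^d$, not merely in $\T^d$) was designed to guarantee; without it, no path from $0$ to $e_i$ within $\{h>0\}$ would need exist and a divergence-free field with mean $e_i$ might not fit inside the constraint set at all. Once the curve is in hand, the remaining verifications are standard mollifier estimates.
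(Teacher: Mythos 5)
Your construction is essentially the same as the paper's: connect $x$ to $x+e_j$ by a Lipschitz curve inside $\{h>0\}$, treat the pushforward $\gamma_\#(\dot\gamma\,dt)$ as a divergence-free $1$-current on the torus, mollify within a tubular neighborhood, and bound the $L^2$ norm by the curve length times $\|\phi_\delta\|_{L^2}$. One small slip: you take $\gamma_i(0)=0$ and $\gamma_i(1)=e_i$, but nothing in the assumptions forces $0\in\{h>0\}$; you should instead pick an arbitrary basepoint $x_0\in\{h>0\}$ (nonempty by Assumption 4) and connect $x_0$ to $x_0+e_i$, exactly as the paper does, after which the projected curve is still a closed loop on $\T^d$ and the rest of your argument goes through unchanged.
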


\begin{proof}
Let $\gamma:[0,1] \to \R^d$ be a Lipschitz curve. Define the vector-valued measure $M \coloneqq \gamma_\# (\dot \gamma\, dt) \in \M(\R^d;\R^d)$. Then $\divergence M = \delta_{\gamma_1}-\delta_{\gamma_0}$ in $\D'(\R^d)$, $ M(\R^d) = \gamma(1) - \gamma(0)$, and $|M|(\R^d) \leq \int_0^1 |\dot \gamma(t)|\,dt = L(\gamma)$.

Let $x\in \{h>0\}$. By the conditions on $\{h>0\}$, there are Lipschitz curves $\gamma_j:[0,1]\to \{h>0\}$, $j=1,\ldots,d$, such that $\gamma_j(0) = x$, $\gamma_j(1) = x+e_j$, and $\delta \coloneqq \min_j \dist(\gamma_j, \partial\{h>0\}) > 0$.

Define $X_U = \sum_{z\in \Z^d} \sum_{j=1}^d U_j ((\gamma_j-z)_\# \dot \gamma_j) \ast \phi_\delta \in \mathcal C_c^\infty(\{h>0\};\R^d)$, where $\phi_\delta \in \mathcal C_c^\infty(B(0,\delta))$ is a standard mollifier. We note that $X_U$ is $\Z^d$-periodic, $\int_{[0,1)^d} X_U(x)\,dx = U$, $\divergence X_U = 0$, and $\|X_U\|_{L^2([0,1)^d)} \leq C \|\phi_\delta\|_{L^2} \sum_{j=1}^d |U_j| L(\gamma_j)^{d+1} \leq C(h) |U|$, where we used Young's convolution inequality and the finite overlap of the curves $(\gamma_j - z)_{z\in \Z^d, j=1,\ldots,d}$. The projection of $X_U$ to $\T^d$ inherits all the relevant properties.  
\end{proof}

We need the following lemma to estimate corrector errors.

\begin{lma}[Local Poincar\'e-trace inequality]\label{lma: poincare-trace}
There are constants $R>0$, $C>0$ depending only on $\{h > 0\}$ such that for any $\eps>0$, $a\in\R^d$, $u\in H^1_{\loc}(\{h_\eps > 0\})$, we have
\begin{equation}
\int_{(a+[0,\eps]^d)\cap \{h_\eps > 0\}} (u-\overline{u})^2 \,dx + \eps \int_{\partial(a+[0,\eps]^d) \cap \{h_\eps > 0\}} (u-\overline{u})^2 \,d\Hm^{d-1} \leq C \eps^2 \int_{a + [-R\eps,R\eps]^d \cap \{h_\eps > 0\}} |\nabla u|^2\,dx.
\end{equation}
Here $\overline u = \fint_{a+[0,\eps]^d} u \,dx$.
\end{lma}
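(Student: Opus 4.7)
The strategy is to rescale so that $\eps = 1$ and then exploit $\Z^d$-periodicity and connectedness of $\{h>0\}$ in $\R^d$ to reduce everything to a single uniform Poincar\'e-trace estimate on connected Lipschitz domains.

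First, under the change of variables $y = (x-a)/\eps$, $v(y) = u(\eps y + a)$, each of the three integrals in the statement scales by the same factor $\eps^d$, so the inequality is equivalent to its $\eps = 1$ version applied to $v$ on the translated periodic set $\{h>0\} - a/\eps$. By $\Z^d$-periodicity of $h$, I may replace $a/\eps$ by its fractional part $b\in[0,1)^d$. Thus it suffices to prove that there exist $R,C$ (independent of $b$) such that for every $b\in[0,1)^d$ and every $v\in H^1_{\loc}(\{h>0\})$,
\[
\int_{(b+[0,1]^d)\cap\{h>0\}}(v-\overline v)^2\,dy + \int_{\partial(b+[0,1]^d)\cap\{h>0\}}(v-\overline v)^2\,d\Hm^{d-1} \leq C\int_{\Omega_b}|\nabla v|^2\,dy,
\]
where $\Omega_b\coloneqq(b+[-R,R]^d)\cap\{h>0\}$ and $\overline v$ is the mean of $v$ on $(b+[0,1]^d)\cap\{h>0\}$.

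Next I would choose $R$ large enough that $\Omega_b$ is a connected bounded Lipschitz domain, with uniformly bounded Lipschitz constants, for every $b\in[0,1)^d$. This is possible because $\{h>0\}$ is open, Lipschitz, $\Z^d$-periodic and connected in $\R^d$: any two unit cells of $\{h>0\}/\Z^d$ can be joined within a cube of bounded size, so any finite number of components of $(b+[0,1]^d)\cap\{h>0\}$ can be linked inside a large enough enlargement. On each such $\Omega_b$, the classical Poincar\'e and trace inequalities for connected Lipschitz domains yield
\[
\int_{\Omega_b}(v-c_b)^2\,dy + \int_{\partial\Omega_b}(v-c_b)^2\,d\Hm^{d-1}\leq C(b)\int_{\Omega_b}|\nabla v|^2\,dy, \qquad c_b\coloneqq\fint_{\Omega_b}v,
\]
and a standard compactness argument over $b\in[0,1)^d$ gives a uniform constant $C$, because the Lipschitz charts of $\Omega_b$ vary continuously in $b$ over the compact parameter space.

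To finish, I would replace $c_b$ by $\overline v$ using Cauchy-Schwarz: since $\overline v - c_b = \fint_{(b+[0,1]^d)\cap\{h>0\}}(v-c_b)$, the squared difference is bounded by $\fint_{\Omega_b}(v-c_b)^2$, and plugging this into both the volume and surface terms (with uniform bounds on $\Hm^d$-measure of $(b+[0,1]^d)\cap\{h>0\}$ and $\Hm^{d-1}$-measure of $\partial(b+[0,1]^d)\cap\{h>0\}$) controls the error by the right-hand side.

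The main obstacle is the uniform-in-$b$ control in the second step: as $b$ varies over $[0,1)^d$ the boundary $\partial(b+[-R,R]^d)$ can cut $\partial\{h>0\}$ non-transversally for exceptional values, potentially blowing up the Lipschitz constant of $\Omega_b$. One handles this by choosing $R$ slightly outside an exceptional null set (or, equivalently, an average over a small range of $R$), using Lipschitzness of $\partial\{h>0\}$ to ensure that a generic choice gives transverse intersections with uniform bounds. This transversality, combined with $\Z^d$-periodicity and compactness of $b\in[0,1)^d$, is what ultimately produces a single universal constant $C$.
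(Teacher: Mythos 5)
Your scaling and periodicity reduction to $\eps = 1$ and $b\in[0,1)^d$ matches the paper exactly. The divergence comes in how the $\eps=1$ inequality is proven, and there you have a genuine gap.

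Your plan is to realize $\Omega_b = (b+[-R,R]^d)\cap\{h>0\}$ as a connected bounded Lipschitz domain with Lipschitz charts varying continuously in $b$, and then invoke the classical Poincar\'e--trace inequality. Neither of those two properties holds. First, the intersection of the Lipschitz set $\{h>0\}$ with an axis-aligned cube is not in general a Lipschitz domain, even generically: cube faces and edges can meet $\partial\{h>0\}$ at grazing angles, producing cusps or wedges with angle degenerating to $0$ or $\pi$, and for $d\geq 3$ the corner structure where several cube faces meet $\partial\{h>0\}$ needs its own analysis. The suggestion to exclude an ``exceptional null set'' of $R$ values does not produce a bound uniform over all $b\in[0,1)^d$: for a fixed generic $R$, the bad configurations in $b$ (where the cube hits $\partial\{h>0\}$ tangentially) form a nonempty set, and the Lipschitz constant of $\Omega_b$ (hence the Poincar\'e--trace constant) is unbounded as $b$ approaches it. Second, your ``standard compactness argument over $b$'' asserts that the charts vary continuously in $b$, but they do not: as $b$ varies, $\Omega_b$ can gain or lose connected components and its boundary geometry changes discontinuously, so there is no continuous family of parametrizations to compactify over. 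The paper explicitly warns that neither the small nor the large cube is a Lipschitz domain, which is precisely what blocks your route.

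The paper avoids the Lipschitz-domain issue entirely by a contradiction argument. It fixes $R$ so that $[0,2]^d\cap\{h>0\}$ is path-connected via $(-R,R)^d\cap\{h>0\}$ (a much weaker and more robust requirement than connectedness of $\Omega_b$ itself), covers $\partial\{h>0\}\cap[0,2]^d$ by finitely many rectangles in which $\{h>0\}$ is a Lipschitz subgraph, and uses a \emph{local} extension operator (Theorem 12.3 in \cite{LeoniBook}) controlled only by $\nabla u$ on the larger cube. Given a normalized sequence violating the inequality, Rellich compactness and continuity of the trace give a limit $u$ with $\nabla u=0$; being piecewise constant and then constant (by the chosen path-connectedness) and mean-zero, $u=0$, contradicting the normalization. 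This avoids needing $\Omega_b$ to be Lipschitz or connected at all, and the uniformity in $b$ comes for free from extracting a convergent subsequence $a_n\to a$ in the compact parameter cube. If you want to salvage your approach, you would need to replace the global Lipschitz-domain claim with a local-extension-plus-compactness argument of this type; as written, the two middle steps do not hold.
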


This differs from the standard Poincar\'e-trace inequality (see e.g. Theorem 12.3 in \cite{LeoniBook}) in that the smaller cube is not connected, nor is either cube Lipschitz-bounded.

\begin{figure}[h]
\begin{center}
\includegraphics[scale=0.5]{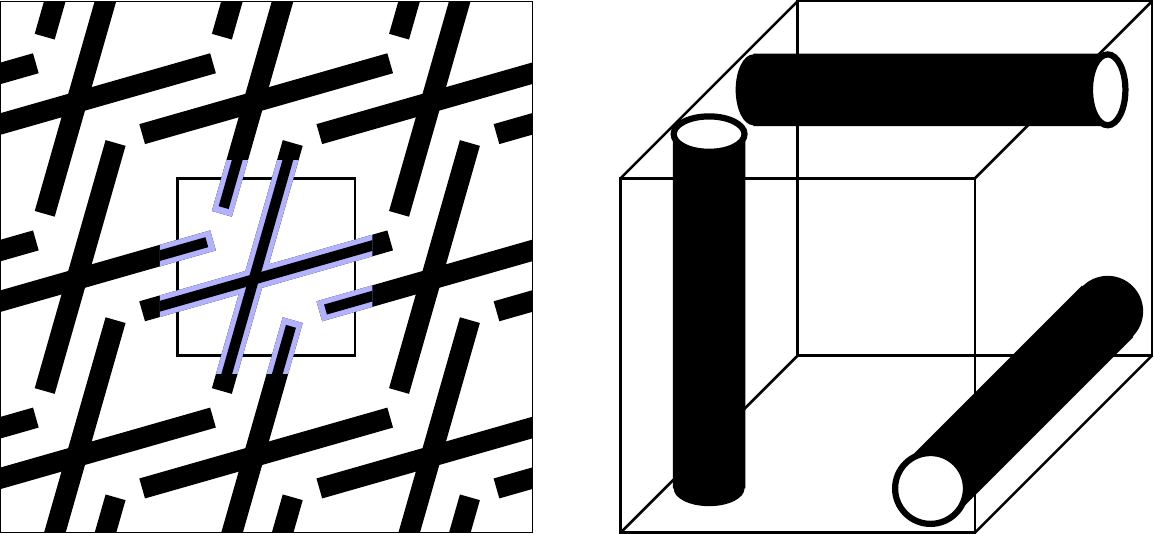}
\caption{Left: Even though the unit cell is not connected we control the $L^2$ variation through the $L^2$-norm of the gradient in the larger cell. Right: If $\{h>0\}$ is not connected Theorem \ref{thm:homogenizationintro} fails. In this case mass may only move in the coordinate directions.}\label{fig: poincare}
\end{center}
\end{figure}

\begin{proof}
The statement is independent of $\eps$. We only have to show it for $\eps = 1$ and $a\in [0,1]^d$.

We take $R>3$ as any number such that all $y,y'\in [0,2]^d \cap \{h>0\}$ are connected by a rectifiable path in $(-R,R)^d \cap \{h>0\}$.

Assume that for this choice of $R$, no such $C$ exists. Then there exists a sequence $(u_n)_{n\in \N} \subset H^1([-R,R]^d)$ and a sequence $(a_n)_{n\in \N} \subset [0,1]^d$ such that $\int_{a_n+[0,1]^d} u_n\,dx = 0$, $\int_{a_n+[0,1]^d} u^2\,dx + \int_{\partial(a_n + [0,1]^d)} u^2 \,d\Hm^{d-1} = 1$, and $\int_{[-R,R]^d} |\nabla u_n|^2 \to 0$.

Because $\{h>0\}$ is Lipschitz bounded, we can cover $\partial\{h > 0\} \cap [0,2]^d$ with finitely many open rectangles $(R_i)_{i\in I}$ such that, up to a rigid motion, $\{h>0\} \cap R_i = \{(\tilde y, y_d)\,:\,\tilde y \in \tilde R_i, 0<y_d<f_i(\tilde y)\}$, where $f:\R^{d-1} \to (0,\infty)$ is Lipschitz. 

From \cite[Theorem 12.3]{LeoniBook}, we infer that there exists a bounded linear extension operator $E:H^1([-R,R]^d\cap \{h>0\}) \to H^1(([-R,R]^d \cap \{h>0\}) \cup \bigcup_{i\in I}R_i)$ such that $Eu = u$ almost everywhere in $[-R,R]^d \cap\{h>0\}$ and
\begin{equation}
\int_{\bigcup_{i \in I} R_i} |\nabla Eu|^2\,dx \leq C \int_{[-R,R]^d \cap \{h>0\}} |\nabla u|^2\,dx.
\end{equation}

Note that only $\nabla u$ appears on the right-hand side since we are not looking for a global extension.

Extending each $u_i$ using this operator, we extract a subsequence (not relabeled) such that $a_i \to a$, $Eu_i \to u$ in $L^2_{\loc}(([-R,R]^d \cap \{h>0\}) \cup \bigcup_{i\in I} R_i)$, and $\nabla Eu_i \to 0$ in $L^2(([-R,R]^d \cap \{h>0\}) \cup \bigcup_{i\in I} R_i)$. Also, the traces $u_n|_{\partial(a_n+[0,1]^d)}\mathds{1}_{\{h>0\}}$ converge in $L^2(\partial([0,1]^d))$ to the trace $u|_{\partial(a+[0,1]^d)}\mathds{1}_{\{h>0\}}$.

It follows that $u$ is piecewise constant. Because any two points in $[0,2]^d$ are path-connected in the domain, $u$ is constant in $[0,2]^d\cap \{h>0\}$. Because $\int_{a+[0,1]^d} u \,dx = 0$, $u=0$ almost everywhere in $[0,2]^d \cap \{h>0\}$. However, we have
\begin{equation}
\int_{(a+[0,1]^d)\cap \{h>0\}} u^2 \,dx + \int_{\partial(a+[0,1]^d)\cap \{h>0\}} u^2 \,d\Hm^{d-1} = 1,
\end{equation}
a contradiction.
\end{proof}
We note that this implies the usal Poincar\'e-trace inequality in particular for $\eps\Z^d$-periodic functions in $H^1(\{h_\eps > 0\})$. 

Finally we prove Theorem \ref{thm:homogenizationintro}.
\subsection{Proof of Theorem \ref{thm:homogenizationintro}}

\begin{proof}[Proof of the lower bound]
We start with a sequence of curves $(\rho_t^\eps)_{t\in [0,1]} \subset L^\infty (\T^d)$ with $0 \leq \rho_t^\eps(dx) \leq h_\eps(x)\,dx$, together with a sequence of momentum fields $(V_t^\eps)_{t\in[0,1]} \subset L^2(\T^d;\R^d)$, such that $\partial_t \rho_t^\eps + \divergence V_t^\eps = 0$ in $\D'((0,1)\times\T^d)$ and
\begin{equation}
E_{h_\eps}((\rho_t^\eps)_t) = \int_0^1 \int_{\T^d} \frac{|V_t^\eps|^2}{\rho_t^\eps}\,dx\,dt \leq C < \infty.
\end{equation}
\\
\emph{Step 1:}
We consider instead averaged versions $(\rho_t^{\delta,\eps})_t, (V_t^{\delta, \eps})_t$ defined through
\begin{equation}\label{eq: average lower bound}
\begin{aligned}
\rho_t^{\delta,\eps} (x) = &(1- \delta)\sum_{z\in \eps\Z^d / \Z^d} H_\delta^\eps(z)\fint_{t-\delta}^{t+\delta} \rho_s(x-z)\,ds+ \delta \alpha \mathds{1}_{\{ h_\eps>0 \}}(x)\\
V_t^{\delta,\eps} (x) = &(1- \delta) \sum_{z\in \eps\Z^d / \Z^d} H_\delta^\eps(z)\fint_{t-\delta}^{t+\delta} V_s(x-z)\,ds.
\end{aligned}
\end{equation}
Here $\rho_t^\eps, V_t^\eps$ are first extended for $t\in \R\setminus [0,1]$ constantly and by $0$ respectively, and $H_\delta^\eps$ is the discrete heat kernel for $\eps \Z^d / \Z^d$ at time $\delta$.

The averaged versions have the following properties:
\begin{equation}\label{eq: mollified regularity}
\begin{aligned}
|\rho_t^{\eps, \delta}(x+\eps e_i) - \rho_t^{\eps, \delta}(x)| \leq & C(\delta)\eps\\
\int_{\T^d} |V_t^{\eps, \delta}(x+\eps e_i) - V_t^{\eps, \delta}(x)|^2 \,dx \leq & C(\delta) \eps^2\\
\|\partial_t \rho_t^{\eps,\delta}\|_{L^\infty} \leq & C(\delta)\eps.
\end{aligned}
\end{equation}

We note that $\rho_t^{\eps,\delta}(dx) \leq h_\eps(x)dx$ and $\partial_t \rho_t + \divergence V_t^{\eps,\delta} = 0$ in $\D'((0,1)\times\T^d)$, and by the convexity of the function $(m,U)\mapsto \frac{|U|^2}{m}$, we have
\begin{equation}\label{eq: mollified energy}
\int_0^1 \int_{\T^d} \frac{|V_t^{\eps, \delta}|^2}{ \rho_t^{\eps,\delta}}\,dx\,dt \leq \int_0^1 \int_{\T^d} \frac{|V_t^\eps|^2}{ \rho_t^\eps}\,dx\,dt.
\end{equation}
\\
\emph{Step 2:}
Define for $x\in \T^d$ the cube $Q_{x,\eps} = (x + [0,\eps)^d )/ \Z^d \subset \T^d$.
Define $m_t^{\eps,\delta}(x) = \fint_{Q_{x,\eps}} \rho_t^{\eps,\delta}\,dy$ and $U_t^{\eps,\delta}(x) = \fint_{Q_{x,\eps}} V_t^{\eps,\delta}\,dy$.
Note that $\partial_t m_t^{\eps, \delta} + \divergence U_t^{\eps,\delta} = 0$ in $\D'((0,1)\times\T^d)$.
We now find a competitor for $f_{\hom}(m_t^{\eps,\delta}(x),U_t^{\eps,\delta}(x))$ for almost every $x,t$:

Consider the Hilbert space $H^1_{\eps\text{-per}}(\{h_\eps > 0\})$ of $\eps\Z^d$-periodic functions with mean $0$, equipped with symmetric bilinear form $\mathcal{A}(\phi,\psi) = \int_{Q_{x,\eps} \cap \{h_\eps>0\}} \nabla \phi \cdot \nabla \psi \,dy$,  which is independent of $x\in \T^d$ and positive definite by Lemma \ref{lma: poincare-trace}.

By the Lax-Milgram theorem, we may thus find for every $x\in \T^d$ a weak solution $\phi_{t,x}^{\eps,\delta} \in H^1_{\eps\text{-per}}(\{h_\eps > 0\})$ of
\begin{equation}\label{eq: weak equation}
\int_{Q_{x,\eps}\cap \{h_\eps > 0\}} \nabla \phi_{t,x}^{\eps,\delta} \cdot \nabla \psi \,dy = -\int_{Q_{x,\eps} \cap \{h_\eps > 0\}} V_t^{\eps,\delta} \cdot \nabla \psi \,dy
\end{equation}
for every $\psi \in H^1_{\eps\text{-per}}(\{h_\eps > 0\})$. Moreover, through integration by parts, using H\"older's inequality and Lemma \ref{lma: poincare-trace}, we may estimate
\begin{equation}
\begin{aligned}\label{eq: estimate Lax}
&-\int_{Q_{x,\eps}} V_t^{\eps,\delta} \cdot \nabla \psi \,dy\\
 = &\int_{\partial Q_{x,\eps}\cap \{h_\eps > 0\}} \frac{1}{2} (V_t^{\eps,\delta}(y) - V_t^{\eps,\delta}(y-\eps n(y))) \cdot n(y) \psi(y) \,d\Hm^{d-1}(y) - \int_{Q_{x,\eps} \cap \{h_\eps > 0\}} \divergence V_t^{\eps}(y) \psi(y)\,dy\\
\leq & C \eps \left(\|\divergence V_t^{\eps,\delta}\|_{L^2(Q_{x,\eps} \cap \{h_\eps > 0\})} + \|V_t^{\eps,\delta}(\cdot - \eps n) - V_t^{\eps,\delta} \|_{L^2(\partial Q_{x,\eps} \cap \{h_\eps > 0\})} \right) \|\nabla \psi\|_{L^2(Q_{x,\eps}\cap \{h_\eps > 0\})},
\end{aligned}
\end{equation}
where we used the fact that $\psi$ is $\eps \Z^d$-periodic and that $V_t^{\eps,\delta}\cdot n = 0$ on $\partial\{h_\eps > 0\}$ in the sense of distributions.

Inserting the solution $\phi_{t,x}^{\eps,\delta}$ into \eqref{eq: estimate Lax} and using the estimates in \eqref{eq: mollified regularity}, we find through Fubini's theorem and H\"older's inequality that for every $t\in [0,1]$ we have
\begin{equation}\label{eq: L^2 corrector estimate}
\int_{\T^d} \eps^{-d} \int_{Q_{x,\eps}\cap \{h_\eps > 0\}} |\nabla \phi_{t,x}^{\eps,\delta}(y)|^2\,dy \,dx \leq C(\delta) \eps^2.
\end{equation}

Further, the vector field $W_t^{\eps,\delta}=V_t^{\eps,\delta} +\nabla \phi_{t,x}^{\eps,\delta}\in L^2(Q_{x,\eps}\cap \{h_\eps>0\};\R^d)$ can be extended periodically to all of $\{h_\eps > 0\}$ and then by 0 in $\{h_\eps=0\}$, and the extension has zero distributional divergence in $\T^d$ by \eqref{eq: weak equation}. It follows that
\begin{equation}
f_{\hom}\left(m_t^{\eps, \delta}(x), U_t^{\eps,\delta}(x)+ \fint_{Q_{x,\eps}} \nabla \phi_{t,x}^{\eps,\delta}(y)\,dy \right) \leq \eps^{-d} \int_{Q_{x,\eps} \cap \{h_\eps > 0\}} \frac{|W_t^{\eps,\delta}(y)|^2}{\rho_t^{\eps,\delta}(y)}\,dy.
\end{equation}

We also use inequality $(vi)$ from Lemma \ref{lma: prophom} to obtain
\begin{equation}
\begin{aligned}
f_{\hom}\left(m_t^{\eps, \delta}(x), U_t^{\eps,\delta}(x)\right) \leq &f_{\hom}\left(m_t^{\eps, \delta}(x), U_t^{\eps,\delta}(x)+ \fint_{Q_{x,\eps}} \nabla \phi_{t,x}^{\eps,\delta}(y)\,dy \right) \\
& + C\frac{\left|U_t^{\eps,\delta}(x)\right|\left|\fint_{Q_{x,\eps}} \nabla \phi_{t,x}^{\eps,\delta}(y)\,dy \right|}{m_t^{\eps,\delta}(x)}.
\end{aligned}
\end{equation}

Integrating over $\T^d \times [0,1]$ yields
\begin{equation}
\begin{aligned}
& \int_0^1 \int_{\T^d} f_{\hom} (m_t^{\eps, \delta}(x), U_t^{\eps,\delta}(x))\,dx\,dt\\
\leq &  \int_0^1 \int_{\T^d} \eps^{-d}\int_{Q_{x,\eps}\cap \{h_\eps > 0\}}\frac{|V_t^{\eps,\delta}(y)|^2}{\rho_t^{\eps,\delta}(y)} + C\frac{|V_t^{\eps,\delta}(y) + \nabla \phi_{t,x}^{\eps,\delta}(y)||\nabla \phi_{t,x}^{\eps,\delta}(y)|}{\rho_t^{\eps,\delta}(y)}\,dy\,dx\,dt\\
 & + \underbrace{C(\delta) \int_0^1 \int_{\T^d} \left|U_t^{\eps,\delta}(x)\right|\left|\fint_{Q_{x,\eps}} \nabla \phi_{t,x}^{\eps,\delta}(y)\,dy \right| \,dx\,dt}_{I} \\
\leq & \int_0^1 \int_{\T^d} \frac{|V_t^{\eps,\delta}(x)|^2}{\rho_t^{\eps,\delta}(x)}\,dx\,dt + I \\
& + \underbrace{C(\delta)\eps^{-d} \int_0^1 \int_{\T^d} \int_{Q_{x,\eps} \cap \{h_\eps > 0\}} \left(|V_t^{\eps,\delta}(y) + \nabla \phi_{t,x}^{\eps,\delta}(y)|\right)|\nabla \phi_{t,x}^{\eps,\delta}(y)|\,dy\,dx\,dt}_{I\!I}.
\end{aligned}
\end{equation}
where we used Jensen's inequality and the convexity of the function $(m,U) \mapsto \frac{|U|^2}{m}$ for the first term, and the lower bound $\rho_t^{\eps,\delta} \geq \delta\alpha$ in $\{h_\eps > 0\}$ for the second.

We can then comfortably bound the error terms by repeatedly applying H\"older's inequality and \eqref{eq: L^2 corrector estimate}.
\begin{equation}
\begin{aligned}
I \leq &  C(\delta)\left(\int_0^1 \int_{\T^d} |U_t^{\eps,\delta}(x)|^2\,dx\,dt\right)^{1/2} \left( \int_0^1 \int_{\T^d} |\fint_{Q_{x,\eps}}\nabla \phi_{t,x}^{\eps,\delta}(y)\,dy|^2\,dx\,dt\right)^{1/2}\\
\leq & C(\delta)\left(\int_0^1 \int_{\T^d} |V_t^{\eps,\delta}(x)|^2\,dx\,dt\right)^{1/2} \left(\int_0^1 \int_{\T^d} \eps^{-d} \int_{Q_{x,\eps}}|\nabla \phi_{t,x}^{\eps,\delta}(y)|^2\,dy\,dx\,dt\right)^{1/2}\\
\leq &C(\delta) \eps.
\end{aligned}
\end{equation}
Similarly,
\begin{equation}
\begin{aligned}
 I\!I \leq & C(\delta)\eps^{-d} \int_0^1 \int_{\T^d} \left\|V_t^{\eps,\delta}(y) - \nabla \phi_{t,x}^{\eps,\delta}\right\|_{L^2(Q_{x,\eps} \cap \{h_\eps > 0\})}\|\nabla \phi_{t,x}^{\eps,\delta}\|_{L^2(Q_{x,\eps} \cap \{h_\eps > 0\})}\,dx\,dt\\
\leq & C(\delta)\left(\int_0^1 \int_{\T^d} |V_t^{\eps,\delta}(x)|^2 + \eps^{-d}\|\nabla \phi_{t,x}^{\eps,\delta}\|_{L^2(Q_{x,\eps} \cap \{h_\eps > 0\})}^2 \,dx\,dt\right)^{1/2}\\
&\times \left(\int_0^1 \int_{\T^d}\eps^{-d}\|\nabla \phi_{t,x}^{\eps,\delta}\|_{L^2(Q_{x,\eps} \cap \{h_\eps > 0\})}^2\,dx\,dt\right)^{1/2}\\
\leq &C(\delta)(\eps + \eps^2)
\end{aligned}
\end{equation}
Combine this with \eqref{eq: mollified energy} to obtain
 \begin{equation}
\liminf_{\eps \to 0}\int_0^1 \int_{\T^d} f_{\hom}(m_t^{\eps,\delta},U_t^{\eps,\delta})\,dx\,dt \leq \liminf_{\eps \to 0}\int_0^1 \int_{\T^d} \frac{|V_t^\eps|^2}{\rho_t^\eps}\,dx\,dt
\end{equation}
for every $\delta > 0$. Using a diagonal sequence $\delta(\eps) \to 0$, we see that $m_t^{\eps,\delta(\eps)} \weakstar \rho_t$ for almost every $t\in [0,1]$. The claim follows then from Lemma \ref{lma: lower semicontinuity}.
\end{proof}

\begin{proof}[Proof of the upper bound]
We have to show that for all curves of measures $(\rho_t)_{t\in[0,1]}\subset \M_+(\T^d)$ there exists for every $\eps=\frac1n$ a curve of measures
$(\rho_t^\eps)_{t\in[0,1]}$ such that as $\eps\to0$ we have for all $t\in[0,1]$  $\rho^\eps_t\weakstar\rho_t$ and
\begin{align}
 \limsup_{\eps\to0}E_{h_\eps}((\rho_t^\eps)_{t\in[0,1]})\leq E_{\hom}((\rho_t)_{t\in[0,1]}).
\end{align}
\\
\emph{Step 1:} We may assume that $(\rho_t)_{t\in[0,1]}$ has finite energy. We mollify 
in time and space with a standard mollifier. Let us call this curve $(\tilde \rho_t)_{t\in[0,1]}\in\C^\infty([0,1]\times \T^d)$ and the corresponding optimal
momentum vector field $(\tilde V_t)_{t\in[0,1]}\in\C^\infty([0,1]\times \T^d,\R^d)$.
\\
\emph{Step 2:} We fix a number $M\in \N$ of time steps satisfying $\eps\ll \frac1M\ll 1$. We define for $t_i\coloneqq\frac{i}{M}$ and $z\in (\eps \Z^d)/ \Z^d$ the following objects
\begin{equation}
\begin{aligned}
 m_{t_i}(z)&\coloneqq\fint_{Q(z,\eps)}\tilde \rho_{t_i}\ dx\\
 U_{t_i}(z,z\pm\eps e_j)&\coloneqq\fint_{t_{i-1}}^{t_{i+1}}\fint_{\partial Q(z,\eps)\cap\partial Q(z\pm\eps e_j,\eps)}\tilde V_s\cdot (\pm e_j)\, d\Hm^{d-1}(x)\, ds.
\end{aligned}
\end{equation}
Note that for $t\in(t_i,t_{i+1})$ with $m_t$ the linear interpolation between $m_{t_i}$ and $m_{t_{i+1}}$
\begin{align}\label{eq: discrete continuity}
\partial_tm_t(z)+\eps^{-1}\sum_{j=1}^d(U_{t_i}(z,z+\eps e_j)+U_{t_i}(z,z-\eps e_j))=0.
\end{align}
\\
\emph{Step 3:} We insert the optimal microstructures $\nu_{t_i,z}\in L^1(\{h>0\})$ and $W_{t_i,z}\in L^2(\{h>0\};\R^d)$ for $f_{\hom}(m_{t_i}(z),U_{t_i}(z))$, where
\begin{align}
U_{t_i}(z)\cdot e_j \coloneqq U_{t_i}(z,z+\eps e_j).
\end{align}
Fix $a\in [0,\eps)^d/\Z^d$ to be chosen later, and define for $x\in Q_{z+a,\eps}$
\begin{equation}\label{eq: average upper bound}
\begin{aligned}
 \rho_{t_i}^\eps(x) \coloneqq &(1-\delta)\sum_{z'\in (\eps \Z^d)/ \Z^d}H_\delta^\eps(z-z')\nu_{t,z'}(\frac{x}{\eps})+\delta\alpha \mathds{1}_{\{h_\eps>0\}}(x),\\
 V_{t_i}^\eps(x) \coloneqq &(1-\delta)\sum_{z'\in(\eps \Z^d)/ \Z^d}H_\delta^\eps(z-z')W_{t,z'}(\frac{x}{\eps}),\\
 X_{t_i}^\eps(x) \coloneqq &(1-\delta) \sum_{z' \in (\eps \Z^d) / \Z^d} H_\delta^\eps(z-z') X_{U_{t_i}(z') - U_{t_{i+1}}(z')}(\frac{x}{\eps}),
\end{aligned}
\end{equation}
where $\delta>0$, $\alpha$ is the positive lower bound of the function $h$, $H_\delta^\eps$ is the discrete heat flow on 
$(\eps \Z^d)/ \Z^d$, and $X_U\in L^2(\{h>0\};\R^d)$ is the vector field from Lemma \ref{lma: potential flow}.
\\
\emph{Step 4:} For $t\in (t_i,t_{i+1})$ we define $\rho^\eps_t\in L^\infty(\T^d)$ and $ V^\eps_t\in L^2(\{h_\eps>0\};\R^d)$ as the linear interpolations
\begin{equation}
\begin{aligned}
 \rho_t^\eps(x)\coloneqq&\frac{t_{i+1}-t}{t_{i+1}-t_i}\rho_{t_i}^\eps(x)+\frac{t-t_i}{t_{i+1}-t_i}\rho_{t_{i+1}}^\eps(x),\\
 V_t^\eps(x)\coloneqq&\frac{t_{i+1}-t}{t_{i+1}-t_i} V_{t_i}^\eps(x)+\frac{t-t_i}{t_{i+1}-t_i} \left(V_{t_{i+1}}^\eps(x) + X_{t_i}^\eps(x) \right).
\end{aligned}
\end{equation}
We see that
\begin{equation}
\divergence V_t^\eps = \sum_{z\in (\eps \Z^d)/\Z^d} \sum_{j=1}^d - [V_t^\eps]\cdot e_j \Hm^{d-1}|_{\partial Q_{z+a,\eps} \cap \partial Q_{z + a - \eps e_j, \eps}}
\end{equation}
where $[V_t^\eps]$ denotes the jump of $V_t^\eps$ from $Q_{z+a,\eps}$ to $Q_{z+a-\eps e_j,\eps}$. Note that since $V_t^\eps\in L^2(\T^d;\R^d)$, by Fubini's theorem the above is defined for almost every $a\in [0,\eps)^d/\Z^d$.

 Moreover, for every $z\in (\eps\Z^d)/\Z^d$ we have
\begin{equation}
\int_{Q_{z+a,\eps}} V_t^\eps (x) \,dx = (1-\delta) \sum_{z'\in (\eps\Z^d)/\Z^d} H_\delta^\eps(z-z') U_{t_i}(z'),
\end{equation}
and
\begin{equation}
\int_{Q_{z+a,\eps}} \partial_t\rho_t^\eps(x)\,dx = (1-\delta)\sum_{z'\in (\eps\Z^d)/\Z^d} H_\delta^\eps(z-z') \frac{m_{t_{i+1}}(z') - m_{t_i}(z')}{t_{i+1}-t_i}.
\end{equation}

Combining the above with \eqref{eq: discrete continuity}, we also obtain that
\begin{equation}
\int_{Q_{z+a,\eps}} \partial_t\rho_t^\eps(x)\,dx + \sum_{j=1}^d  \int_{\partial Q_{z+a,\eps} \cap \partial Q_{z + a - \eps e_j, \eps}}- [V_t^\eps]\cdot e_j \,d\Hm^{d-1} = 0,
\end{equation}
or more concisely
\begin{equation}\label{eq: cube}
(\partial_t \rho_t^\eps + \divergence V_t^\eps) (Q_{z+a,\eps}) = 0,
\end{equation}
for every $z\in (\eps\Z^d)/\Z^d$, for almost every $a$. Note that in \eqref{eq: cube} it is imperative that $Q_{z+a\eps}$ be the \emph{half-open} cubes.
\\
\emph{Step 5:} Let $\phi_t^\eps\in H^1(\{h_\eps > 0\})$ be the weak solution to
\begin{equation}
\begin{cases}
\Delta \phi_t^\eps = -(\partial_t \rho_t^\eps + \divergence V_t^\eps)&\text{, in }\{h_\eps > 0\}\\
\nabla \phi_t^\eps \cdot n = 0 &\text{, on }\partial \{h_\eps>0\}\\
\int_{\{h_\eps > 0\}} \phi_t^\eps(x)\,dx = 0,
\end{cases}
\end{equation}
i.e. the unique function in the Hilbert space $H^1_\eps \coloneqq\{\psi\in H^1(\{h_\eps > 0\})\,:\,\int_{\{h_\eps > 0\}} \psi(x)\,dx = 0\}$ with
\begin{equation}
\begin{aligned}\label{eq: weak form}
&\int_{\{h_\eps > 0\}} \nabla \phi_t^\eps \cdot \nabla \psi \,dx \\
&= \sum_{z\in (\eps \Z^d)/\Z^d} \int_{Q_{z+a,\eps}\cap \{h_\eps > 0\}} \partial_t\rho_t^\eps \psi\,dx + \sum_{j=1}^d  \int_{\partial Q_{z+a,\eps} \cap \partial Q_{z + a - \eps e_j, \eps} \cap \{h_\eps > 0\}}- [V_t^\eps]\cdot e_j \psi \,d\Hm^{d-1}
\end{aligned}
\end{equation}
for all $\psi \in H_\eps^1$. Note that after extending $\nabla\phi_t^\eps$ by $0$ in $\{h_\eps=0\}$,  \eqref{eq: weak form} actually holds for all $\psi\in H^1(\T^d)$.

By the Lax-Milgram Theorem, a unique such $\phi_t^\eps$ exists for almost every $a$. Testing with $\psi = \phi_t^\eps$ and using Lemma \ref{lma: poincare-trace}, we see that
\begin{equation}
\begin{aligned}
\int_{\{h_\eps > 0 \}} |\nabla \phi_t^\eps|^2\,dx = &  \sum_{z\in (\eps \Z^d)/\Z^d} \bigg( \int_{Q_{z+a,\eps}} \partial_t\rho_t^\eps (\phi_t^\eps - \overline{\phi_t^\eps}_z)\,dx\\
& + \sum_{j=1}^d  \int_{\partial Q_{z+a,\eps} \cap \partial Q_{z + a - \eps e_j, \eps}}- [V_t^\eps]\cdot e_j (\phi_t^\eps - \overline{\phi_t^\eps}_z) \,d\Hm^{d-1}\bigg)\\
\leq & C(\{h_\eps > 0\})\eps \left(\|\partial_t \rho_t^\eps\|_{L^2(\{h_\eps > 0\})} + \frac{1}{\sqrt{\eps}}\|[V_t^\eps]\|_{L^2(\bigcup_z \partial Q_{z+a,\eps})}\right) \|\nabla \phi_t^\eps\|_{L^2(\{h_\eps > 0\})}. 
\end{aligned}
\end{equation}

At this point we pick $a\in [0,\eps)^2/\Z^d$ such that $\|[V_t^\eps]\|_{L^2(\bigcup_z \partial Q_{z+a,\eps})}^2 \leq C(\delta) \eps$, which is possible by Fubini's theorem and the regularity of the discrete heat flow. Of course, $\|\partial_t \rho_t^\eps\|_{L^\infty} \leq CM$, so that
\begin{equation}\label{eq: hnorm}
\int_{\{h_\eps > 0\}} |\nabla \phi_t^\eps|^2\,dx \leq (C(\delta) + CM^2)\eps^2.
\end{equation}

Further, taking $W_t^\eps = V_t^\eps + \nabla \phi_t^\eps \mathds{1}_{\{h_\eps > 0\}}$, we see by \eqref{eq: weak form} that $\partial_t \rho_t^\eps + \divergence W_t^\eps = 0$ in $\D'((0,1)\times\T^d)$.
\\
\emph{Step 6:} We estimate using \eqref{eq: hnorm} that
\begin{equation}\label{eq: first upper}
\begin{aligned}
 \int_{\T^d}\frac{|W^\eps_t|^2}{\rho^\eps_t}\,dx\leq &\int_{\T^d}\frac{ |V_t^\eps|^2}{\rho_t^\eps}\, dx+\frac{C}{\delta\alpha}\int_{\T^d} |\nabla \phi_t^\eps||V_t^\eps + \nabla \phi_t^\eps|\,dx\\
 \leq & \int_{\T^d}\frac{ |V_t^\eps|^2}{\rho_t^\eps}\, dx + \frac{C(\delta)\eps}{\alpha}(1+\eps)(1 + M)\\
 \leq & \int_{\T^d}\frac{ |V_t^\eps|^2}{\rho_t^\eps}\, dx + \frac{C(\delta)}{\alpha}M\eps.
\end{aligned}
\end{equation}
Using the joint convexity of the function $(m,U)\mapsto \frac{|U|^2}{m}$ we find for $t\in (t_i,t_{i+1})$ that
\begin{equation}\label{eq: second upper}
\begin{aligned}
 \int_{\T^d}\frac{|V^\eps_t|^2}{\rho^\eps_t}\,dx\leq&\frac{t_{i+1}-t}{t_{i+1}-t_i}\int_{\T^d}\frac{|V^\eps_{t_i}|^2}{\rho^\eps_{t_i}}\,dx+\frac{t-t_i}{t_{i+1}-t_i}\int_{\T^d}\frac{|V^\eps_{t_{i+1}}+X^\eps_{t_i}|^2}{\rho^\eps_{t_{i+1}}}\, dx\\
 \leq &\frac{t_{i+1}-t}{t_{i+1}-t_i}\int_{\T^d}\frac{|V^\eps_{t_i}|^2}{\rho^\eps_{t_i}}+\frac{t-t_i}{t_{i+1}-t_i}\int_{\T^d}\frac{|V^\eps_{t_{i+1}}|^2}{\rho^\eps_{t_{i+1}}}\, dx\\
 &+\underbrace{\frac{C(\delta)}{\alpha}\int_{\T^d}|X^\eps_{t_i}||X^\eps_{t_i} + V_t^\eps|\, dx}_{I}.
 \end{aligned}
 \end{equation}
 We estimate $I$ using Lemma \ref{lma: potential flow}:
 \begin{equation}\label{eq: third upper}
 \begin{aligned}
  I \leq &\frac{C(\delta)}{\alpha}\|X^\eps_{t_i}\|_{L^2}(\|X^\eps_{t_i}\|_{L^2} + \|V_t^\eps\|_{L^2}) \\
  \leq &\frac{C(\delta)}{\alpha}\left(\sum_{z'\in(\eps\Z^d)/\Z^d}\eps^d|U_{t_i}(z')-U_{t_{i+1}}(z')|^2\right)^{1/2}\leq \frac{C(\delta)}{\alpha M}.
 \end{aligned}
 \end{equation}
 
 For the main term we find through exploiting the convexity and the definition of $V_{t_i}^\eps,\rho_{t_i}^\eps$ that
 \begin{equation}\label{eq: fourth upper}
 \begin{aligned} \int_{\T^d}\frac{|V^\eps_{t_i}|^2}{\rho^\eps_{t_i}}\,dx \leq \sum_{z\in(\eps\Z^d)/\Z^d}\eps^d f_{\hom}(m_{t_i}(z),U_{t_i}(z)).
 \end{aligned}
 \end{equation}

 We now combine the estimates \eqref{eq: first upper}, \eqref{eq: second upper}, \eqref{eq: third upper}, \eqref{eq: fourth upper} and integrate in time so that
 \begin{equation}
 \begin{aligned}
 \int_0^1 \int_{\T^d} \frac{|W_t^\eps|^2}{\rho_t^\eps}\,dx\,dt \leq & \sum_{i=0}^M \sum_{z\in (\eps \Z^d)/\Z^d} \frac{\eps^d}{M}f_{\hom}(m_{t_i}(z), U_{t_i}(z)) + \frac{C(\delta)}{\alpha}\left(\frac{1}{M} + M\eps\right). 
 \end{aligned}
 \end{equation}
 
 Finally, we use the Lipschitz continuity of $f_{\hom}$ from Lemma \ref{lma: prophom} and the Lipschitz continuity of $(\tilde \rho, \tilde V)$ to estimate the Riemann sum above by the integral
 \begin{equation}
 \sum_{i=0}^M \sum_{z\in (\eps \Z^d)/\Z^d} \frac{\eps^d}{M}f_{\hom}(m_{t_i}(z), U_{t_i}(z)) \leq \int_0^1 \int_{\T^d} f_{\hom}(\tilde \rho_t(x),\tilde V_t(x))\,dx\,dt + C(\delta) (\frac{1}{M} + \eps).
 \end{equation}

Choosing $M=\lfloor{\eps^{-1/2}}\rfloor$ and letting $\eps\to 0$ we otain the desired estimate
\begin{align}
 \limsup_{\eps\to0}E_{h_\eps}((\rho^\eps_t)_{t\in[0,1]})\leq\int_0^1\int_{\T^d} f_{\hom}({d\tilde\rho_t},\tilde V_t)\, dx\, dt\leq E_{\hom}((\rho_t)_{t\in[0,1]}),
\end{align}
where we used the convexity of $E_{\hom}$ in the last equality (Lemma \ref{lma: prophom}).
 Finally take a diagonal sequence such that $\rho_t^\eps\weakstar\rho_t$ for all $t\in[0,1]$.
\end{proof}

\begin{rem}
Finally, we note that we may add lower bounds on the density, in the form $\rho_t(A) \geq \int_A l(x)\,dx$ for every closed set $A$, with a measurable lower density bound $l\in L^1(\Omega)$, with $l(x)\leq h(x)$ almost everywhere. This is just another convex constraint.

In fact, Theorem \ref{thm:homogenizationintro} can be proved under the additional constraint for $l^\eps(x) = l(x/\eps)$ with a few easy modifications. In \eqref{eq: f_hom}, we take the infimum with the additional constraint that $\nu(x)\geq l(x)$ almost everywhere, increasing the energy.

In Lemma \ref{lma: prophom}, the upper bound in (iv) then has to be replaced by
\begin{equation}
f_{\hom}(m,U) \leq C\frac{|U|^2}{m - \int_{\T^d}l(x)\,dx}.
\end{equation}
\end{rem}

Finally, in \eqref{eq: average lower bound} and \eqref{eq: average upper bound}, the term $\delta \alpha \mathds{1}_{\{h_\eps > 0\}}$ has to be replaced by $\delta (\alpha \mathds{1}_{\{h_\eps > 0\}} \lor l_\eps)$.

\bibliography{obstacle_transport}

\begin{thebibliography}{10}

\bibitem{Ambrosio-Crippa}
Luigi Ambrosio, Gianluca Crippa, Camillo De~Lellis, Felix Otto, and Michael
  Westdickenberg.
\newblock {\em Transport equations and multi-{D} hyperbolic conservation laws},
  volume~5 of {\em Lecture Notes of the Unione Matematica Italiana}.
\newblock Springer-Verlag, Berlin; UMI, Bologna, 2008.
\newblock Lecture notes from the Winter School held in Bologna, January 2005,
  Edited by Fabio Ancona, Stefano Bianchini, Rinaldo M. Colombo, De Lellis,
  Andrea Marson and Annamaria Montanari.

\bibitem{AmFuPa}
Luigi Ambrosio, Nicola Fusco, and Diego Pallara.
\newblock {\em Functions of bounded variation and free discontinuity problems}.
\newblock Oxford Mathematical Monographs. The Clarendon Press, Oxford
  University Press, New York, 2000.

\bibitem{benamou2000}
Jean-David Benamou and Yann Brenier.
\newblock {A computational fluid mechanics solution to the Monge-Kantorovich
  mass transfer problem}.
\newblock {\em Numerische Mathematik}, 84(3):375--393, 2000.

\bibitem{benamou2017variational}
Jean-David Benamou, Guillaume Carlier, and Filippo Santambrogio.
\newblock Variational mean field games.
\newblock In {\em Active Particles, Volume 1}, pages 141--171. Springer, 2017.

\bibitem{braides}
Andrea Braides.
\newblock {\em Gamma-convergence for Beginners}, volume~22.
\newblock Clarendon Press, 2002.

\bibitem{braides1998}
Andrea Braides and Anneliese Defranceschi.
\newblock {\em Homogenization of multiple integrals}, volume~12.
\newblock Oxford University Press, 1998.

\bibitem{braides2000}
Andrea Braides, Irene Fonseca, and Giovanni Leoni.
\newblock A-quasiconvexity: relaxation and homogenization.
\newblock {\em ESAIM: Control, Optimisation and Calculus of Variations},
  5:539--577, 2000.

\bibitem{buttazzo2009}
Giuseppe Buttazzo, Chlo{\'e} Jimenez, and Edouard Oudet.
\newblock An optimization problem for mass transportation with congested
  dynamics.
\newblock {\em SIAM Journal on Control and Optimization}, 48(3):1961--1976,
  2009.

\bibitem{cardaliaguet2016}
Pierre Cardaliaguet, Alp{\'a}r~R M{\'e}sz{\'a}ros, and Filippo Santambrogio.
\newblock First order mean field games with density constraints: pressure
  equals price.
\newblock {\em SIAM Journal on Control and Optimization}, 54(5):2672--2709,
  2016.

\bibitem{dalmaso}
Gianni Dal~Maso.
\newblock {\em An introduction to $\Gamma$-convergence}, volume~8.
\newblock Springer Science \& Business Media, 2012.

\bibitem{FordFulkerson}
Lester~Randolph Ford~Jr and Delbert~Ray Fulkerson.
\newblock {\em Flows in networks}.
\newblock Princeton university press, 2015.

\bibitem{FJM}
Gero Friesecke, Richard~D. James, and Stefan Müller.
\newblock A theorem on geometric rigidity and the derivation of nonlinear plate
  theory from three-dimensional elasticity.
\newblock {\em Communications on Pure and Applied Mathematics},
  55(11):1461--1506, 2002.

\bibitem{jordan1998}
Richard Jordan, David Kinderlehrer, and Felix Otto.
\newblock {The variational formulation of the Fokker--Planck equation}.
\newblock {\em SIAM journal on mathematical analysis}, 29(1):1--17, 1998.

\bibitem{korman2015}
Jonathan Korman and Robert McCann.
\newblock Optimal transportation with capacity constraints.
\newblock {\em Transactions of the American Mathematical Society},
  367(3):1501--1521, 2015.

\bibitem{LeoniBook}
Giovanni Leoni.
\newblock {\em {A first course in Sobolev spaces}}.
\newblock American Mathematical Soc., 2017.

\bibitem{liu2016euler}
Jian-Guo Liu, Robert~L Pego, and Dejan Slep{\v{c}}ev.
\newblock {Euler sprays and Wasserstein geometry of the space of shapes}.
\newblock {\em arXiv:1604.03387}, 2016.

\bibitem{liu2016least}
Jian-Guo Liu, Robert~L. Pego, and Dejan Slep\v{c}ev.
\newblock Least action principles for incompressible flows and geodesics
  between shapes.
\newblock {\em Calc. Var. Partial Differential Equations}, 58(5):Paper No. 179,
  43, 2019.

\bibitem{moerbeke1976}
Pierre Moerbeke.
\newblock On optimal stopping and free boundary problems.
\newblock {\em Archive for Rational Mechanics and Analysis}, 60(2):101--148,
  1976.

\bibitem{otto2001}
Felix Otto.
\newblock The geometry of dissipative evolution equations: the porous medium
  equation.
\newblock 2001.

\bibitem{prestige}
Christopher Priest.
\newblock {\em The Prestige}.
\newblock Touchstone, Simon \& Schuster, 1995.

\bibitem{protter}
Philip~E Protter.
\newblock Stochastic differential equations.
\newblock In {\em Stochastic integration and differential equations}, pages
  249--361. Springer, 2005.

\bibitem{smirnov1994}
S.~K. Smirnov.
\newblock Decomposition of solenoidal vector charges into elementary solenoids
  and the structure of normal one-dimensional currents.
\newblock {\em St. Petersburg Mathematical Journal}, 5(4):841--867, 1994.

\bibitem{teorell}
Torsten Teorell.
\newblock {Studies on the “Diffusion Effect” upon ionic distribution. Some
  theoretical considerations}.
\newblock {\em Proceedings of the National Academy of Sciences of the United
  States of America}, 21(3):152, 1935.

\bibitem{vazquez2007}
Juan~Luis V{\'a}zquez.
\newblock {\em The porous medium equation: mathematical theory}.
\newblock Oxford University Press, 2007.

\end{thebibliography}
    
\end{document}